\newtheorem{theorem}{Theorem}
\newtheorem{lemma}[theorem]{Lemma}
\newtheorem*{lemma*}{Lemma}
\newtheorem{proposition}[theorem]{Proposition}
\newtheorem{definition}[theorem]{Definition}
\newtheorem{remark}[theorem]{Remark}
\newcommand{\Z}{\mathbb{Z}}
\newcommand{\R}{\mathbb{R}}
\newcommand{\C}{\mathbb{C}}
\newcommand{\E}{\mathbb{E}}
\newcommand{\Prob}{\mathbb{P}}
\begin{document}

\title[Global spectral fluctuations in the GUE]{Global spectral fluctuations in the Gaussian Unitary Ensemble}
\author{Christian Webb}
\address{Department of mathematics and systems analysis, Aalto University, PO Box 11000, 00076 Aalto, Finland}
\email{christian.webb@aalto.fi}
\date{\today}

\begin{abstract}
We consider global fluctuations of the spectrum of the GUE. Using results on the linear statistics of such matrices as well as variance bounds on the eigenvalues, we show that under a suitable scaling, global fluctuations of the spectrum can be asymptotically described in terms of a logarithmically correlated Gaussian field. We also discuss briefly connections between different objects in RMT giving rise to log-correlated fields: the logarithm of the absolute value of the characteristic polynomial, the eigenvalue counting function, and the field of fluctuations of the eigenvalues around their expected locations.  
\end{abstract}

\maketitle

\section{Introduction}

The goal of this note is to describe the fluctuations (around the semicircle law) of the eigenvalues of a large random matrix drawn from the Gaussian Unitary Ensemble. Such questions have of course been  studied previously. For example, Gustavsson has proven in \cite{gustavsson} that if one considers a single eigenvalue in the bulk of the spectrum and its fluctuation around its "classical location" - i.e. that suggested by the semicircle law - and normalizes by its variance, this quantity is asymptotically a standard normal random variable. Gustavsson also provides similar results near the edge and also for the joint distribution of a fixed number of eigenvalues.

We aim to consider a similar question, but with the difference that we consider the global scale of the spectrum - namely we consider simultaneously a positive fraction of all of the eigenvalues. To obtain a non-trivial limiting object, the normalization will be different from that in \cite{gustavsson}. Also the limiting object will be a rather rough object - a logarithmically correlated Gaussian field that needs to be understood as a generalized function. 

Global fluctuations are usually described in terms of linear statistics \cite{ds,johansson,rv} (or essentially equivalently: in terms of the logarithm of the absolute value of the characteristic polynomial \cite{fks,hko,rv} or the eigenvalue counting function \cite{bg}). While the essence of our main result follows from a rather simple calculation relating the global fluctuations of the eigenvalues around their expected locations to linear statistics (in particular, making use of results from \cite{dallaporta,johansson}) the author has not run into a rigorous statement in the literature concerning this connection or the description of the limiting object in terms of a log-correlated Gaussian field. That being said, results of this flavor are known either in other models or on a non-rigorous level: such a result is known to hold for the Plancherel measure on Young diagrams (see \cite{io}), and in the context of RMT (non-rigorous) results of this type were indicated in \cite{rgmrf}. One reason to be interested in such a limit theorem is that recently, there have been advances in understanding how the extrema of approximations of log-correlated fields behave (see e.g. \cite{drz}), and one might suspect that if such behavior is universal, this could suggest how the maximal fluctuation of the eigenvalues (in the bulk) behavs.

The precise thing we will prove is that the global "fluctuation field" converges weakly in a certain Sobolev space to log-correlated Gaussian field (an approach taken also in \cite{fks,hko} when studying the logarithm of the characteristic polynomial). If one were to consider only convergence in the sense of finite dimensional distributions (instead of weak convergence), the proof would be quite a bit simpler - indeed weak convergence requires bounds on the growth rate of a certain family of linear statistics and we'll need to be fairly specific about the space our object lives on. One of the main motivation for not being satisfied with convergence in the sense of finite dimensional distributions is a connection between the fluctuation field and the eigenvalue counting function. Our analysis for weak convergence suggests a (fairly weak) way to analyze the asymptotic difference between these two objects. With stronger estimates, it seems possible that one could relate for example the extrema of these two fields to one another. The drawback of our approach is that it unfortunately makes this note quite heavy even though the argument at the heart of our results is a very simple calculation.

The outline of the paper is the following: we'll begin with introducing the relevant notation to define our main object of interest. Before defining our limiting object and stating our main result, we'll have to introduce the relevant Sobolev spaces where our objects live, and discuss some properties of Chebyshev polynomials (which are relevant for the description of our Sobolev spaces). After stating our main result, we'll recall some facts from the literature: e.g. Johansson's result on the linear statistics of the GUE (\cite{johansson}) as well as discuss some bounds on the fluctuations of the eigenvalues due to Dallaporta \cite{dallaporta} which will play an important role in our proof.  After this, we'll give a heuristic proof of our main result and discuss non-rigorously some of the connections between the different objects in the GUE giving rise to log-correlated Gaussian fields. Finally we'll prove our main result.

We mention here that throughout this note, we use the convention that the value of (irrelevant) constants may change from line to line, and we won't comment on this further. 

\vspace{0.3cm}

{\bf Acknowledgements:} The author wishes to thank Y.V. Fyodorov for helpful comments and pointing out the reference \cite{rgmrf}.

\section{Notation, main result, and some results from the literature}

In this section, we'll first fix our notation related to the GUE and define the object we are interested in. After this, we'll describe the Sobolev spaces where our object lives in. Next we'll describe our main result. Finally we'll recall some known results that will play important roles in our argument.

\subsection{The GUE and the "fluctuation field"}

Let us first fix our notation concerning the Gaussian Unitary Ensemble and its eigenvalues.

\begin{definition}\label{def:gue}
For $N\in \Z_+$, the {\rm{GUE($N$)}} is the following probability measure on the space of $N\times N$ complex Hermitian matrices:

\begin{equation}
\Prob_N(dH)=\frac{1}{Z_N}e^{-\mathrm{Tr} H^2}dH,
\end{equation}

\noindent where $Z_N$ is an explicit constant we don't care about and $dH$ is Lebesgue measure on the space of $N\times N$ Hermitian complex matrices. 

We write $\mathcal{H}=\frac{1}{\sqrt{2N}}H$ and $(\lambda_1,...,\lambda_N)$ for the ordered eigenvalues of $\mathcal{H}$: $\lambda_1\leq \lambda_2\leq \cdots \leq \lambda_N$.
\end{definition}

We'll also introduce some notation related to the semicircle law.

\begin{definition}\label{def:semic}
For $x\in[-1,1]$, let 

\begin{equation}
\begin{array}{ccc}
\sigma(x)=\frac{2}{\pi}\sqrt{1-x^2} & \mathrm{and} & G(x)=\int_{-1}^x\sigma(y)dy.
\end{array}
\end{equation}

Moreover, for $j\in\lbrace 0,1,...,N\rbrace$, let 

\begin{equation}
\gamma_j=G^{-1}\left(\frac{j}{N}\right).
\end{equation}
\end{definition}

\begin{remark}
It follows from \cite{gustavsson} that the points $\gamma_j$  - sometimes called the "classical locations" of the eigenvalues $\lambda_j$ - are close to the expectation of the eigenvalues $\gamma_j$. Though as suggested by the following definition, they are not quite close enough  for our needs.
\end{remark}

As already discussed, we are interested in studying (more or less) all of the eigenvalues $(\lambda_j)_{j=1}^N$ simultaneously. Instead of encoding them into the discrete index $j$, we'll find it more convenient to introduce a continuous index - otherwise it wouldn't be so clear what space our object lives in or what kind of limit it converges to. Also we want to study fluctuations around the semi circle law and scale things in such a way that a non-trivial limit exists. The following definition of the "eigenvalue fluctuation field" will turn out to be the correct one for our purposes.

\begin{definition}\label{def:field}
For $j=1,...,N$ and $x\in(\gamma_{j-1},\gamma_j]$, let 

\begin{equation}
X_N(x)=N\sigma(\gamma_j)\left(\lambda_j-N\int_{\gamma_{j-1}}^{\gamma_j}y\sigma(y)dy\right).
\end{equation}
\end{definition} 

\begin{remark}
Note that the factor $N\sigma(\gamma_j)$ is roughly the inverse of the average eigenvalue gap $\E(\lambda_{j+1}-\lambda_j)$ in the sense that when one normalizes the eigenvalue gap by this quantity, one has convergence (to the Gaudin-Mehta law) as $N\to\infty$ - see \cite{tao}. We also point out that it follows directly from our definitions that

\begin{equation}
\gamma_{j-1}\leq N\int_{\gamma_{j-1}}^{\gamma_j}y\sigma(y)dy\leq \gamma_j,
\end{equation}

\noindent but shifting $\lambda_j$ just by either $\gamma_j$ or $\gamma_{j-1}$ would not produce a centered field.
\end{remark}

\subsection{Sobolev spaces}

It will turn out that it is natural to expand our field $X_N$ in Chebyshev polynomials of the second kind so we'll need to discuss the space where such an expansion will be a well defined object. We'll begin with recalling the definition of Chebyshev polynomials of the second kind and some basic facts related to them. Then we'll define our space, which is a Sobolev space of generalized functions. We'll also point out an identity the Chebyshev polynomials of the second kind satisfy which will be useful in describing our limiting object appearing in our main result. Finally we'll recall some basic facts about Chebyshev polynomials of the first kind that we'll need in our proof.

\begin{definition}\label{def:cheby}
For $k\in\lbrace 0,1,...\rbrace$, the Chebyshev polynomial of the second kind of degree $k$ is defined (on $(-1,1)$) by 

\begin{equation}
U_k(\cos \theta)=\frac{\sin (k+1)\theta}{\sin \theta}.
\end{equation}
\end{definition}

\begin{remark}\label{rem:ortho}
Using basic trigonometric identities it is simple to check that $U_0(x)=1$, $U_1(x)=2x$, and 

\begin{equation}
U_{k+1}(x)=2xU_k(x)-U_{k-1}(x)
\end{equation}

\noindent so $U_k$ really is a polynomial.

\vspace{0.3cm}

One can also check from the orthogonality of trigonometric functions that 

\begin{equation}
\int_{-1}^1 U_k(x) U_l(x)\sigma(x)dx=\delta_{k,l}.
\end{equation}
\end{remark}

Let us now define the Fourier-Chebyshev coefficients (of the second kind).

\begin{definition}\label{def:chebcoef}
Let 

\begin{equation}
\nu(dx)=\frac{2}{\pi}\frac{1}{\sqrt{1-x^2}}dx,
\end{equation}

\noindent $f\in L^2((-1,1),\nu)$ and $k\in\lbrace 0,1,...\rbrace$. We then define 

\begin{equation}
s_k(f)=\frac{2}{\pi}\int_{-1}^1 f(x)U_k(x)dx.
\end{equation}
\end{definition}

\begin{remark}\label{rem:sineseries}
Note that if we use the coordinate $x=\cos \theta$ and write $\widetilde{f}(\theta)=f(\cos \theta)$, we have

\begin{equation}
s_k(f)=\frac{2}{\pi}\int_0^\pi \widetilde{f}(\theta)\sin (k+1)\theta d\theta
\end{equation}

\noindent so we see that $(s_k(f))_{k=0}^\infty$ are simply the coefficients of $\widetilde{f}$ in its Fourier sine series. Thus for example if $f$ is differentiable and $f(-1)=f(1)=0$, then

\begin{equation}
f(\cos \theta)=\sum_{k=0}^\infty s_k(f)\sin(k+1)\theta=\sum_{k=0}^\infty s_k(f)U_k(\cos \theta)\sin \theta
\end{equation}

\noindent or if we write $x=\cos \theta$, 

\begin{equation}
f(x)=\sum_{k=0}^\infty s_k(f)U_k(x)\sqrt{1-x^2}.
\end{equation}

Moreover, the role of $\nu$ in Definition \ref{def:chebcoef} was so that $L^2((-1,1),\nu)$ is the space where the theory of Chebyshev series (of the second kind) is analogous to standard $L^2$ Fourier theory. In fact, many of the definitions we consider would perhaps be simpler to state and more natural in the "trigonometric coordinates", but the technical calculations we'll carry out in the proof are simpler with the current definitions.

\end{remark}

One of the most common Sobolev spaces consists of $L^2$ functions on the unit circle whose Fourier coefficients satisfy certain decay properties. Keeping in mind Remark \ref{rem:sineseries}, we'll want to restrict to the "sine part" of this space so we make the following definition (though the relevant interpretation is in Remark \ref{rem:intep}).

\begin{definition}\label{def:sobo}
For $\alpha\in \R$, let 

\begin{equation}
\mathcal{S}_\alpha=\left\lbrace s=(s_k)_{k=0}^\infty\in \R^\infty: \sum_{k=0}^\infty|s_k|^2(1+k^2)^\alpha<\infty\right\rbrace
\end{equation}

\noindent and equip it with the inner product

\begin{equation}
\langle s,s'\rangle_\alpha=\sum_{k=0}^\infty s_k s_k'(1+k^2)^\alpha.
\end{equation}

We write $||\cdot ||_\alpha$ for the corresponding norm.

\end{definition}

\begin{remark}\label{rem:intep}
As in the Fourier case, $\mathcal{S}_\alpha$ is a Hilbert space and can be identified with a subspace of $L^2((-1,1),\nu)$ if $\alpha>0$ and in this case, $\mathcal{S}_{-\alpha}$ can be understood as the dual space (consisting of generalized functions) of $\mathcal{S}_\alpha$. The interpretation is that for $\alpha>0$ we understand the elements of $\mathcal{S}_\alpha$ to be functions of the form

\begin{equation}
f(x)=\sum_{k=0}^\infty s_k(f)U_k(x)\sqrt{1-x^2},
\end{equation}

\noindent where 

\begin{equation}
\sum_{k=0}^\infty |s_k(f)|^2(1+k^2)^\alpha<\infty.
\end{equation}

The elements of $\mathcal{S}_{-\alpha}$ are then generalized functions $\psi$ which we understand as the formal series 

\begin{equation}
\psi(x)=\sum_{k=0}^\infty s_k(\psi)U_k(x)\sqrt{1-x^2}
\end{equation}

\noindent satisfying 

\begin{equation}
\sum_{k=0}^\infty |s_k(\psi)|^2(1+k^2)^{-\alpha}.
\end{equation}

The action of $\psi\in \mathcal{S}_{-\alpha}$ on $f\in \mathcal{S}_{\alpha}$ is 

\begin{equation}
\psi(f)=\sum_{k=0}^\infty s_k(\psi)s_k(f)
\end{equation}

\noindent which can be written formally as 

\begin{equation}
\psi(f)=\int_{-1}^1 \psi(x)f(x)\frac{2}{\pi}\frac{1}{\sqrt{1-x^2}}dx.
\end{equation}

\end{remark}

We also point out the following fact which will be useful when describing the covariance kernel of our limiting object.

\begin{lemma}\label{le:logdiag}
Let $x,y\in (-1,1)$. Then 

\begin{align}
\notag C(x,y):&=\sum_{k=0}^\infty \frac{1}{k+1}U_k(x)U_k(y)\sqrt{1-x^2}\sqrt{1-y^2}\\
&=-\log \frac{|x-y|}{1-xy+\sqrt{1-x^2}\sqrt{1-y^2}}.
\end{align}
\end{lemma}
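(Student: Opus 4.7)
The plan is to reduce the identity to a classical Fourier series computation via the substitution $x = \cos\theta$, $y = \cos\phi$ with $\theta,\phi \in (0,\pi)$. Under this substitution, Definition \ref{def:cheby} gives $U_k(\cos\theta)\sin\theta = \sin((k+1)\theta)$, and $\sqrt{1-x^2} = \sin\theta$, $\sqrt{1-y^2} = \sin\phi$. Hence the general term of $C(x,y)$ collapses to $\frac{1}{k+1}\sin((k+1)\theta)\sin((k+1)\phi)$. After reindexing $m=k+1$ and applying the product-to-sum formula $\sin(m\theta)\sin(m\phi) = \tfrac{1}{2}\cos(m(\theta-\phi)) - \tfrac{1}{2}\cos(m(\theta+\phi))$, the series becomes
\[
\tfrac{1}{2}\sum_{m=1}^\infty \frac{\cos(m(\theta-\phi))}{m} \; - \; \tfrac{1}{2}\sum_{m=1}^\infty \frac{\cos(m(\theta+\phi))}{m}.
\]

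Next I would invoke the classical Fourier identity $\sum_{m\geq 1} \cos(m\alpha)/m = -\log|2\sin(\alpha/2)|$, valid for $\alpha \notin 2\pi\Z$. This follows by taking the real part of $-\log(1-re^{i\alpha}) = \sum_{m\geq 1} r^m e^{im\alpha}/m$ and letting $r\to 1^-$, using the elementary identity $|1-e^{i\alpha}| = 2|\sin(\alpha/2)|$. Applying this with $\alpha = \theta-\phi$ and $\alpha = \theta+\phi$ produces a closed form expressed in terms of $\log|\sin\frac{\theta\pm\phi}{2}|$.

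The final step is to translate back to the $x,y$ variables using the trigonometric identities
\[
x-y = \cos\theta - \cos\phi = -2\sin\tfrac{\theta+\phi}{2}\sin\tfrac{\theta-\phi}{2}
\]
and
\[
1 - xy + \sqrt{1-x^2}\sqrt{1-y^2} = 1 - \cos(\theta+\phi) = 2\sin^2\tfrac{\theta+\phi}{2}.
\]
These recast $\log|\sin\frac{\theta-\phi}{2}|$ and $\log|\sin\frac{\theta+\phi}{2}|$ directly in terms of $|x-y|$ and $1 - xy + \sqrt{1-x^2}\sqrt{1-y^2}$, yielding the right-hand side as stated.

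The main technical obstacle is that the series $\sum_m \cos(m\alpha)/m$ is only conditionally convergent, so the term-by-term rearrangements above require justification. For $x \neq y$ in $(-1,1)$ one has $\theta \neq \phi$ with $\theta+\phi \in (0,2\pi)$ and $\theta-\phi \in (-\pi,\pi)\setminus\{0\}$, so Dirichlet's test gives convergence of each trigonometric sum; Abel summation (taking $r \to 1^-$ inside $\sum_m r^m e^{im\alpha}/m$) then rigorously justifies evaluating them as the displayed logarithms. Diagonal behaviour ($x=y$, equivalently $\theta=\phi$) is reflected in the divergence of the series, matching the $-\log|x-y|$ singularity of the claimed closed form.
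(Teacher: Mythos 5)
Your proposal is correct and takes essentially the same route as the paper: substitute $x=\cos\theta$, $y=\cos\phi$, apply the product-to-sum identity to reduce to $\sum_m \cos(m\alpha)/m = -\log|2\sin(\alpha/2)|$, and translate back via $\cos\theta-\cos\phi = -2\sin\frac{\theta+\phi}{2}\sin\frac{\theta-\phi}{2}$ and $1-xy+\sqrt{1-x^2}\sqrt{1-y^2}=1-\cos(\theta+\phi)=2\sin^2\frac{\theta+\phi}{2}$. The only difference is that you spell out the Abel-summation justification for the conditionally convergent cosine series, which the paper leaves implicit.
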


\begin{proof}
Write $x=\cos \theta$ and $y=\cos \phi$ so we have 

\begin{align}
\notag C(x,y)&=\sum_{k=1}^\infty \frac{1}{k}\sin k\theta\sin k\phi\\
\notag &=\sum_{k=1}^\infty \frac{1}{2k}(\cos k(\theta-\phi)-\cos k(\theta+\phi))\\
&=-\frac{1}{2}\log \left(4\sin^2\frac{\theta-\phi}{2}\right)+\frac{1}{2}\log \left(4\sin^2\frac{\theta+\phi}{2}\right)\\
\notag &=-\log \frac{\left|\sin\frac{\theta-\phi}{2}\right|}{\left|\sin\frac{\theta+\phi}{2}\right|}.
\end{align}

We then note that as

\begin{equation}
\sin^2\frac{x}{2}=\frac{1}{2}(1-\cos x),
\end{equation}

\noindent we have 

\begin{align}
\notag \frac{\left|\sin\frac{\theta-\phi}{2}\right|}{\left|\sin\frac{\theta+\phi}{2}\right|}&=\frac{2\left|\sin\frac{\theta-\phi}{2}\sin\frac{\theta+\phi}{2}\right|}{1-\cos(\theta+\phi)}\\
&=\frac{|\cos \theta-\cos\phi|}{1-\cos(\theta+\phi)}\\
\notag &=\frac{|x-y|}{1-xy+\sqrt{1-x^2}\sqrt{1-y^2}}
\end{align}

\noindent which completes the proof.

\end{proof}

We'll also make use of the following property of the Chebyshev polynomials of the first kind (which is actually a special case of a general result due to Markov - see e.g. \cite{shadrin} for proofs and further information).

\begin{proposition}\label{prop:markov}
Let $T_n$ be a Chebyshev polynomial of the first kind (i.e. it satisfies $T_n(\cos\theta)=\cos n\theta$). Then 

\begin{equation}
\sup_{x\in[-1,1]}|T_n^{(k)}(x)|\leq T_n^{(k)}(1)=\prod_{j=0}^{k-1}\frac{n^2-j^2}{(2j+1)}.
\end{equation}

\end{proposition}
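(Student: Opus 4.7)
My plan is to split the proposition into its two pieces: the explicit evaluation $T_n^{(k)}(1) = \prod_{j=0}^{k-1}(n^2-j^2)/(2j+1)$, and the pointwise bound $\sup_{[-1,1]}|T_n^{(k)}| \leq T_n^{(k)}(1)$. The first piece is a direct induction from the Chebyshev ODE; the second is an instance of a classical extremal theorem which I would cite.

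For the evaluation at $x=1$, I would begin from the standard Chebyshev differential equation
$$(1-x^2) T_n''(x) - x\, T_n'(x) + n^2\, T_n(x) = 0,$$
which follows by differentiating $T_n(\cos\theta) = \cos(n\theta)$ twice in $\theta$ and then expressing everything in $x = \cos\theta$ (the resulting polynomial identity holds on all of $\R$). I would then apply the Leibniz rule to differentiate this identity $k$ times; because $(1-x^2)$ has only two nonzero derivatives and $x$ has only one, the computation is short and produces
$$(1-x^2) T_n^{(k+2)}(x) - (2k+1)\, x\, T_n^{(k+1)}(x) + (n^2-k^2)\, T_n^{(k)}(x) = 0.$$
Evaluating at $x=1$ kills the leading term and yields the one-step recursion
$$T_n^{(k+1)}(1) = \frac{n^2-k^2}{2k+1}\, T_n^{(k)}(1),$$
after which the closed form in the proposition follows by induction from $T_n(1) = 1$.

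For the supremum bound, I would invoke V.A.\ Markov's inequality in the form: for any real polynomial $p$ of degree at most $n$ with $\sup_{[-1,1]}|p| \leq 1$, one has $\sup_{[-1,1]}|p^{(k)}| \leq T_n^{(k)}(1)$, with equality attained by $p = \pm T_n$. Applied to $p = T_n$ itself (which indeed satisfies $|T_n(x)| = |\cos(n\theta)| \leq 1$ on $[-1,1]$) this immediately gives the desired estimate, and for the proof of Markov's inequality I would simply refer the reader to \cite{shadrin}.

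The main obstacle is the inequality, not the formula: A.A.\ Markov's case $k=1$ has a short elementary proof, but V.A.\ Markov's extension to arbitrary $k$ relies on the Chebyshev alternation theorem and a more delicate extremal argument. Reproducing that argument would introduce a substantial technical detour into a note whose real content lies elsewhere, so citing Shadrin's treatment is both cleaner and entirely standard. A conceivable alternative would be to prove the bound directly by combining the differentiated ODE above with a careful analysis of the interior critical points of $T_n^{(k)}$ — at such a point $x_0$ the ODE forces $T_n^{(k+2)}(x_0)$ and $T_n^{(k)}(x_0)$ to have opposite signs, which is the starting point of one classical route — but this quickly reproduces the Markov argument itself, so there is little gain over citing the reference.
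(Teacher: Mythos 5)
Your proposal is correct and matches the paper's treatment: the paper itself gives no proof of this proposition, but simply cites \cite{shadrin} for the V.A.\ Markov inequality, exactly as you do for the supremum bound. Your self-contained derivation of the evaluation $T_n^{(k)}(1)=\prod_{j=0}^{k-1}(n^2-j^2)/(2j+1)$ from the differentiated Chebyshev ODE is accurate (the Leibniz computation and the one-step recursion at $x=1$ check out) and is a welcome addition, though the paper leaves this to the same reference.
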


We also point out the following simple results.

\begin{lemma}\label{le:chebyder}
For any $n\in\lbrace 0,1,...\rbrace$ and $k\in \Z_+$,

\begin{equation}
x\mapsto T_n^{(k)}(x)
\end{equation}

\noindent is increasing for $x>1$.
\end{lemma}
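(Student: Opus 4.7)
The plan is to establish the stronger pointwise statement $T_n^{(k+1)}(x) > 0$ for all $x > 1$, from which the claimed monotonicity of $T_n^{(k)}$ on $(1,\infty)$ follows immediately. The key input is the classical fact that, for $n \geq 1$, $T_n$ has $n$ distinct real zeros located in $(-1,1)$, namely $\cos\frac{(2j-1)\pi}{2n}$ for $j = 1, \dots, n$.

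The main step is an iterated application of Rolle's theorem. Between each pair of consecutive zeros of $T_n$ there must lie a zero of $T_n'$, which produces $n-1$ real zeros; since $\deg T_n' = n-1$, these are all of its zeros and they lie in $(-1,1)$. Inductively, for each $j \in \{0,1,\dots,n-1\}$, the polynomial $T_n^{(j)}$ has degree $n - j$ and all of its zeros in $(-1,1)$. I would combine this with the observation that the leading coefficient of $T_n$ is $2^{n-1} > 0$, so the leading coefficient of every $T_n^{(j)}$ with $j \leq n$ is positive; hence $T_n^{(j)}(x) \to +\infty$ as $x \to \infty$, and the absence of zeros on $(1,\infty)$ forces $T_n^{(j)}(x) > 0$ throughout $(1,\infty)$. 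Applying this to $j = k+1$, in the regime $k+1 \leq n-1$, gives the claim.

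The remaining boundary cases are pure bookkeeping: if $k+1 = n$, then $T_n^{(k+1)}$ is the positive constant $2^{n-1} n!$, so $T_n^{(k)}$ is strictly increasing; if $k+1 > n$, then $T_n^{(k+1)} \equiv 0$ and $T_n^{(k)}$ is constant, hence weakly increasing; and the case $n = 0$ is trivial since $T_0 \equiv 1$ has vanishing derivatives. In all cases the monotonicity statement holds. There is no genuine obstacle in this argument — it rests only on Rolle's theorem together with a sign check on the leading coefficient — so the main task in writing it up is to present it cleanly rather than to overcome any analytic difficulty.
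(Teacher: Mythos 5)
Your proof is correct, but it takes a genuinely different route from the paper's. The paper expands $T_n$ in a Taylor series around $x=1$, observes from Proposition~\ref{prop:markov} that all the coefficients $T_n^{(j)}(1)$ are non-negative, and concludes directly that every derivative $T_n^{(k)}(x)$ is non-negative on $[1,\infty)$; monotonicity then follows because the next derivative is itself non-negative. You instead argue from the interlacing of zeros: since $T_n$ has all $n$ of its zeros strictly inside $(-1,1)$, repeated Rolle shows that every $T_n^{(j)}$ (for $j \le n$) has all its zeros in $(-1,1)$, and combined with the positive leading coefficient $2^{n-1}$ this forces $T_n^{(j)} > 0$ on $(1,\infty)$. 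Both arguments are complete and both reduce the claim to positivity of $T_n^{(k+1)}$ on $(1,\infty)$. Your version is more elementary and self-contained in that it does not invoke Proposition~\ref{prop:markov}; the paper's version is shorter in context because that proposition is already stated and reused elsewhere, and the Taylor-coefficient form of the argument feeds directly into the nearby Lemma~\ref{le:tder}. You also handle the degenerate cases ($k+1=n$, $k+1>n$, $n=0$) explicitly, which the paper glosses over but which are harmless, as you note, since only weak monotonicity is asserted.
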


\begin{proof}
We write 

\begin{equation}
T_n(x)=\sum_{j=0}^n\frac{T_n^{(j)}(1)}{j!}(x-1)^j
\end{equation}

\noindent so that 

\begin{equation}
T_n^{(k)}(x)=\sum_{j=k}^n\frac{1}{(j-k)!}T_n^{(j)}(1)(x-1)^{j-k}.
\end{equation}

From Proposition \ref{prop:markov} we see in particular that $T_n^{(j)}(1)$ are non-negative. So we see that $T_n^{(k)}(x)$ is non-negative for $x\geq 1$ for all $k$ which implies in particular that $T_n^{(k+1)}(x)\geq 0$ for $x\geq 1$ so $T_n^{(k)}(x)=T_n^{(k)}(x)$ is increasing.

\end{proof}

\begin{lemma}\label{le:tder}
For $x>1$,

\begin{equation}
T_k''(x)\leq k^4T_k(x).
\end{equation}
\end{lemma}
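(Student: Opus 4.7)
The plan is to mimic the Taylor expansion strategy used in the proof of Lemma \ref{le:chebyder}. Expanding $T_k$ about $x=1$ gives
\begin{equation}
T_k(x)=\sum_{j=0}^k \frac{T_k^{(j)}(1)}{j!}(x-1)^j,\qquad T_k''(x)=\sum_{j=0}^{k-2}\frac{T_k^{(j+2)}(1)}{j!}(x-1)^j,
\end{equation}
and by Proposition \ref{prop:markov} every coefficient $T_k^{(j)}(1)=\prod_{i=0}^{j-1}\frac{k^2-i^2}{2i+1}$ is non-negative, so for $x>1$ both sides of the desired inequality are sums of non-negative terms. It therefore suffices to establish the term-by-term bound $T_k^{(j+2)}(1)\leq k^4\,T_k^{(j)}(1)$ for $0\leq j\leq k-2$, because the (non-negative) tail terms $j=k-1,k$ present only in the expansion of $T_k(x)$ can only make the right-hand side larger.

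The term-by-term bound reduces, using the product formula for $T_k^{(j)}(1)$, to the elementary inequality
\begin{equation}
\frac{T_k^{(j+2)}(1)}{T_k^{(j)}(1)}=\frac{(k^2-j^2)(k^2-(j+1)^2)}{(2j+1)(2j+3)}\leq k^4,
\end{equation}
which is immediate since each of the two numerator factors is at most $k^2$ while the denominator is at least $1$. Putting these pieces together yields $T_k''(x)\leq k^4 T_k(x)$ for all $x\geq 1$, and in particular for $x>1$.

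There is no real obstacle here: the main (essentially the only) point is to notice that Proposition \ref{prop:markov} guarantees non-negativity of all Taylor coefficients at $x=1$, so that the inequality can be reduced to a pointwise comparison of coefficients, after which the required bound is a one-line estimate on a ratio of products.
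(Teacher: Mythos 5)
Your proposal is correct and follows essentially the same route as the paper: Taylor expansion of $T_k$ about $x=1$, non-negativity of the coefficients $T_k^{(j)}(1)$ from Proposition \ref{prop:markov}, and the term-by-term ratio bound $T_k^{(j+2)}(1)/T_k^{(j)}(1)=\frac{(k^2-j^2)(k^2-(j+1)^2)}{(2j+1)(2j+3)}\leq k^4$. There is nothing to add.
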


\begin{proof}
Let us expand 

\begin{equation}
T_k(x)=\sum_{j=0}^k \frac{T_k^{(j)}(1)}{j!}(x-1)^j.
\end{equation}

We thus have 

\begin{equation}
T_k''(x)=\sum_{j=2}^k \frac{T_k^{(j)}(1)}{(j-2)!}(x-1)^{j-2}=\sum_{j=0}^{k-2}\frac{T_k^{(j+2)}(1)}{j!}(x-1)^j.
\end{equation}

From Proposition \ref{prop:markov}, 

\begin{equation}
T_k^{(j+2)}(1)=\prod_{p=0}^{j+1}\frac{k^2-p^2}{2p+1}=T_k^{(j)}(1)\frac{k^2-j^2}{2j+1}\frac{k^2-(j+1)^2}{2j+3}\leq k^4 T_k^{(j)}(1).
\end{equation}

Thus for $x>1$ (as $T_k^{(l)}(1)\geq 0$ for all $l$)

\begin{equation}
T_k''(x)\leq k^4 \sum_{j=0}^{k-2}\frac{T_k^{(j)}(1)}{j!}(x-1)^j\leq k^4 \sum_{j=0}^{k}\frac{T_k^{(j)}(1)}{j!}(x-1)^j=k^4 T_k(x).
\end{equation}

\end{proof}

\begin{lemma}\label{le:tbound}
For $x>1$

\begin{equation}
T_k(x)\leq e^{2k \sqrt{x-1}}.
\end{equation}
\end{lemma}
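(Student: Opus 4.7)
The plan is to exploit the Taylor expansion of $T_k$ around $x=1$, exactly as in Lemmas \ref{le:chebyder} and \ref{le:tder}, combined with the explicit product formula for $T_k^{(j)}(1)$ supplied by Proposition \ref{prop:markov}. Writing
$$T_k(x)=\sum_{j=0}^{k}\frac{T_k^{(j)}(1)}{j!}(x-1)^j,$$
the key estimate is to bound each factor of the product $T_k^{(j)}(1)=\prod_{i=0}^{j-1}(k^2-i^2)/(2i+1)$ crudely by $k^2/(2i+1)$, which gives $T_k^{(j)}(1)\leq k^{2j}/(2j-1)!!$. Using the elementary identity $(2j-1)!!=(2j)!/(2^j j!)$, this reads $T_k^{(j)}(1)/j!\leq 2^j k^{2j}/(2j)!$.

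Substituting this bound into the Taylor expansion and extending the finite sum to an infinite one (all terms are non-negative for $x>1$, as already noted in the proof of Lemma \ref{le:chebyder}) then yields
$$T_k(x)\leq \sum_{j=0}^{\infty}\frac{\bigl(2k^2(x-1)\bigr)^j}{(2j)!}=\cosh\!\bigl(k\sqrt{2(x-1)}\bigr).$$
Since $\cosh y\leq e^y$ for $y\geq 0$, and $\sqrt{2}<2$, this gives $T_k(x)\leq e^{k\sqrt{2(x-1)}}\leq e^{2k\sqrt{x-1}}$, as required. The calculation is entirely routine and mirrors the arguments just used for Lemmas \ref{le:chebyder} and \ref{le:tder}; there is no real obstacle, the only point to check carefully is the combinatorial identity $(2j-1)!!=(2j)!/(2^j j!)$, which is standard. (One could alternatively give a one-line proof via the substitution $x=\cosh t$ using $T_k(\cosh t)=\cosh(kt)\leq e^{kt}$ together with $t\leq 2\sqrt{2}\sinh(t/2)=2\sqrt{x-1}\cdot\sqrt{2}/\sqrt{2}$, but the Taylor approach is consistent with the style of the preceding lemmas.)
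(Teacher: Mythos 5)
Your main argument is correct, and it is a genuinely different route from the paper's. The paper proves the bound directly from the explicit closed form on $(1,\infty)$: it analytically continues $T_k(\cos\theta)=\cos k\theta$ to get $T_k(\cosh\tau)=\cosh k\tau$, hence $T_k(x)\le e^{k\,\mathrm{arcosh}\,x}$, and then checks $\mathrm{arcosh}\,x\le 2\sqrt{x-1}$ by showing $h(x)=\mathrm{arcosh}\,x-2\sqrt{x-1}$ has $h(1)=0$, $h'<0$. Your main proof instead runs the Taylor expansion at $x=1$ through the Markov product formula from Proposition~\ref{prop:markov}: bounding $T_k^{(j)}(1)\le k^{2j}/(2j-1)!!$, using $(2j-1)!!=(2j)!/(2^j j!)$, and summing to $\cosh\bigl(k\sqrt{2(x-1)}\bigr)$. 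Both are valid; your route has the virtue of being fully in keeping with the style of Lemmas~\ref{le:chebyder} and~\ref{le:tder}, reusing machinery already established, and it even produces the slightly sharper constant $\sqrt{2}$ before you relax it to $2$. Your parenthetical alternative is essentially the paper's proof (replacing the derivative argument by $\sinh y\ge y$ to get $\mathrm{arcosh}\,x\le\sqrt{2(x-1)}$), though the displayed chain "$t\leq 2\sqrt{2}\sinh(t/2)=2\sqrt{x-1}\cdot\sqrt{2}/\sqrt{2}$" has stray factors of $\sqrt{2}$; the clean version is $t\le 2\sinh(t/2)=\sqrt{2(x-1)}\le 2\sqrt{x-1}$.
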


\begin{proof}
Recall that $T_k$ satisfies

\begin{equation}
T_k(\cos \theta)=\cos k\theta
\end{equation}

\noindent for $\theta\in[0,\pi]$. Both sides of this equation are entire functions of $\theta$ so this equality holds for all $\theta\in \C$. In particular, for $\theta$ on the imaginary axis, if we write $\theta=i\tau$, we have 

\begin{equation}
T_k(\cosh \tau)=\cosh k\tau
\end{equation}

\noindent for all $\tau\in \R$. For $\tau>0$, write $\tau=\mathrm{arcosh}\, x$, where $x>1$, and we find

\begin{equation}
T_k(x)=\cosh k\:\mathrm{arcosh}\, x\leq e^{k\: \mathrm{arcosh}\, x}.
\end{equation}

Consider then the function $h(x)=\mathrm{arcosh}\, x-2\sqrt{1-x}$. One has $h(1)=0$ and 

\begin{equation}
h'(x)=\frac{1-\sqrt{x+1}}{\sqrt{x^2-1}}<0
\end{equation}

\noindent so $h(x)\leq 0$ for $x\geq 1$ and we have our claim.

\end{proof}

\subsection{Main result}

We now define our limiting object and state our main result as well as discuss the relevant weak convergence theory on our Sobolev spaces $\mathcal{S}_{-\alpha}$.

\begin{definition}
Let $(Y_k)_{k=0}^\infty$ be i.i.d. standard Gaussian random variables, and let 

\begin{equation}
X(x)=\sum_{k=0}^\infty\frac{1}{\sqrt{k+1}}Y_k U_k(x)\sqrt{1-x^2},
\end{equation}

\noindent or alternatively identify $X$ with the sequence

\begin{equation}
(s_k(X))_{k=0}^\infty=\left(\frac{1}{\sqrt{k+1}}Y_k\right)_{k=0}^\infty
\end{equation}

\noindent which almost surely is an element of $\mathcal{S}_{-\alpha}$ for any $\alpha>0$ (since $\E||X||_{-\alpha}^2<\infty$).
\end{definition}

\begin{remark}
Again it might be clearer to define $X$ in the trigonometric coordinates, where one would just have

\begin{equation}
X(\cos \theta)=\sum_{k=1}^\infty \frac{1}{\sqrt{k}}Y_{k-1}\sin k\theta.
\end{equation}
\end{remark}

\begin{remark}\label{rem:logcor}
Note that by Lemma \ref{le:logdiag}, we have (formally - though making this precise would not require much work)

\begin{equation}
\E(X(x)X(y))=-\log \frac{|x-y|}{1-xy+\sqrt{1-x^2}\sqrt{1-y^2}}.
\end{equation} 

Thus in the bulk, when $x\to y$, the covariance has a logarithmic singularity:

\begin{equation}
\E(X(x)X(y))= -\log |x-y|+\mathcal{O}(1).
\end{equation}

\end{remark}

We are now in a position to state our main result.

\begin{theorem}\label{th:main}
For $\alpha>3$, $\pi X_N$ converges to $X$ in distribution in the space $\mathcal{S}_{-\alpha}$.
\end{theorem}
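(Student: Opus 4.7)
The plan is to apply the standard criterion for weak convergence in the separable Hilbert space $\mathcal{S}_{-\alpha}$: (i) convergence of the finite-dimensional distributions of the Chebyshev coefficients $(\pi s_k(X_N))_{k\geq 0}$ to $(Y_k/\sqrt{k+1})_{k\geq 0}$, and (ii) tightness of the laws of $\pi X_N$ in $\mathcal{S}_{-\alpha}$. Because the embedding $\mathcal{S}_{-\alpha'}\hookrightarrow \mathcal{S}_{-\alpha}$ is compact for $\alpha' < \alpha$, (ii) will follow from $\sup_N \mathbb{E}\|\pi X_N\|_{-\alpha'}^2 < \infty$ for some $\alpha'\in (3,\alpha)$, i.e.\ from a uniform-in-$N$ bound on $\sum_{k\geq 0}(1+k^2)^{-\alpha'}\mathbb{E}|\pi s_k(X_N)|^2$.

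The single computation underlying both (i) and (ii) is an explicit formula for $\pi s_k(X_N)$. Since $X_N$ is piecewise constant on the cells $(\gamma_{j-1},\gamma_j]$ and $T_{k+1}'=(k+1)U_k$, one gets directly
\begin{equation*}
\pi s_k(X_N) = \frac{2}{k+1}\sum_{j=1}^N N\sigma(\gamma_j)\bigl(\lambda_j - \widetilde{\gamma}_j\bigr)\bigl(T_{k+1}(\gamma_j)-T_{k+1}(\gamma_{j-1})\bigr),
\end{equation*}
where $\widetilde{\gamma}_j := N\int_{\gamma_{j-1}}^{\gamma_j} y\sigma(y)\,dy$. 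A second-order Taylor expansion of $T_{k+1}(\lambda_j)$ about $\widetilde{\gamma}_j$, combined with the near-identity $N\sigma(\gamma_j)(\gamma_j-\gamma_{j-1})\approx 1$ and the approximation $U_k(\widetilde{\gamma}_j)\approx U_k(\gamma_j)$, rewrites this as
\begin{equation*}
\pi s_k(X_N) = \frac{2}{k+1}\,L_N(T_{k+1}) + E_N^{(k)},
\end{equation*}
where $L_N(f):=\sum_j f(\lambda_j)-\mathbb{E}\sum_j f(\lambda_j)$ is the centered linear statistic and $E_N^{(k)}$ collects the Taylor remainder $\tfrac{1}{k+1}\sum_j T_{k+1}''(\xi_j)(\lambda_j-\widetilde{\gamma}_j)^2$ together with the quadrature discrepancies comparing $\sum_j T_{k+1}(\widetilde{\gamma}_j)$ with $N\int T_{k+1}\sigma\,dx$ and those coming from replacing $\gamma_j$ by $\widetilde{\gamma}_j$ inside $U_k$. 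All pieces of $E_N^{(k)}$ are then controlled in $L^2$ by Proposition \ref{prop:markov} (which gives $\|T_{k+1}^{(\ell)}\|_\infty\lesssim (k+1)^{2\ell}$ on $[-1,1]$) paired with Dallaporta's variance bounds $\mathbb{E}(\lambda_j-\widetilde{\gamma}_j)^2\lesssim(\log N)/N^2$ in the bulk, with a Tracy--Widom-type adjustment near the edge.

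For step (i), Johansson's central limit theorem for linear statistics of the GUE gives that for any fixed $K$, the vector $(L_N(T_{k+1}))_{k=0}^K$ converges in distribution to a centered Gaussian vector with diagonal covariance proportional to $k+1$; dividing by $2/(k+1)$ and using that $E_N^{(k)}\to 0$ in $L^2$ for each fixed $k$ (with plenty of room), the vector $(\pi s_k(X_N))_{k=0}^K$ converges to a vector of independent Gaussians of variances $\{1/(k+1)\}_{k=0}^K$ — precisely the law of $(s_k(X))_{k=0}^K$, the constants on both sides matching the prefactor $\pi$ in the statement.

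For step (ii), one needs bounds uniform in both $N$ and $k$. The main-term contribution $(2/(k+1))L_N(T_{k+1})$ has variance $\lesssim 1/(k+1)$ by the explicit Hermite-kernel formula for determinantal linear statistics (equivalently, by the $H^{1/2}$-norm bound $\mathrm{Var}(L_N(f))\lesssim \|f\|_{H^{1/2}}^2$ applied to $f=T_{k+1}$), and this is summable against $(1+k^2)^{-\alpha'}$ for any $\alpha'>0$. The decisive term is $\mathbb{E}|E_N^{(k)}|^2$: using Proposition \ref{prop:markov} ($\|T_{k+1}''\|_\infty\lesssim k^4$) together with Dallaporta's bounds after summation over $j$, one obtains a polynomial-in-$k$ bound whose summability against $(1+k^2)^{-\alpha'}$ fails exactly at the threshold $\alpha'=3$, giving the condition $\alpha>3$ in the statement.

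\textbf{Main obstacle.} Step (i) is essentially soft once the decomposition is in place; the real work is step (ii), and within it the uniform control of the Taylor remainder for $k$ comparable to or larger than $N$. There the polynomial $T_{k+1}$ oscillates on scales finer than the mean eigenvalue gap, so Taylor expansion only gives polynomial control via the $k^4$ bound on $T_{k+1}''$, and this is precisely what fixes the threshold $\alpha>3$. The edge contribution (where Dallaporta's bulk bound degrades) requires separate treatment using sharper Tracy--Widom-type variance estimates on extreme eigenvalues together with Lemmas \ref{le:chebyder} and \ref{le:tbound}, which extend the relevant bounds from $[-1,1]$ to a small neighbourhood without significant loss.
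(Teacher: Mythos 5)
Your overall architecture — finite-dimensional convergence via Johansson's CLT plus a Taylor decomposition of $s_k(X_N)$ around linear statistics, and tightness via compact Sobolev embedding plus moment bounds, using Dallaporta and the Markov inequality for Chebyshev derivatives — matches the paper's. The decomposition formula for $\pi s_k(X_N)$ is correct, and step (i) is essentially the paper's Lemma~\ref{le:fdd}.

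The tightness step, however, has a genuine gap, and you have misidentified where the threshold $\alpha>3$ comes from. You propose to bound $\sup_N\mathbb{E}\|\pi X_N\|_{-\alpha'}^2$, i.e.\ a uniform-in-$N$ polynomial-in-$k$ bound on $\mathbb{E}|s_k(X_N)|^2$ via the decomposition into $L_N(T_{k+1})$ and a remainder $E_N^{(k)}$. But that decomposition only yields $N$-uniform bounds in a regime like $k\lesssim\sqrt{\log N}$: the remainder involves $T_{k+1}$ and its derivatives evaluated at points \emph{outside} $[-1,1]$ (wherever $|\lambda_j|>1$), where $T_{k+1}$ grows like $e^{2(k+1)\sqrt{x-1}}$ (Lemma~\ref{le:tbound}), not polynomially. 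The paper's Lemmas~\ref{le:enfk} and \ref{le:els} show this exponential growth is compensated by the decay of $\rho_N$ past the edge and by a Harer--Zagier estimate, but only up to $k\lesssim\sqrt{\log N}$; for larger $k$ the bound is lost (Lemma~\ref{le:els} in particular produces a factor $T_{k+1}(2)\asymp e^{ck}$). So the assertion that "Lemmas~\ref{le:chebyder} and \ref{le:tbound}\ldots extend the relevant bounds\ldots without significant loss" is not justified for $k$ comparable to a power of $N$, and your claim that the $k^4$ bound on $T_{k+1}''$ (valid only on $[-1,1]$) is "precisely what fixes the threshold $\alpha>3$" is not correct.

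What the paper actually does is split into small and large $k$ and work with the \emph{first} moment, $\mathbb{E}\|X_N\|_{-\alpha'}\leq\sum_k\mathbb{E}|s_k(X_N)|(1+k^2)^{-\alpha'/2}$. For $k\leq\sqrt{\log N}$ the linear-statistics decomposition gives $\mathbb{E}|s_k(X_N)|\leq C$; for $k>\sqrt{\log N}$ only the crude Cauchy--Schwarz bound $\mathbb{E}|s_k(X_N)|^2\leq C(k+1)^2\log N$ (Lemma~\ref{le:bigk}) is available — \emph{not} uniform in $N$, and this is exactly why the paper avoids $p=2$: it explicitly notes that good covariance bounds for $X_N$ are not available. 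The threshold $\alpha>3$ then arises from balancing the $\log N$ in the crude estimate against the shrinking tail $\sum_{k>\sqrt{\log N}}k^{1-\alpha'}$, not from the bulk second-derivative bound. To make your $p=2$ route rigorous you would need both the split into $k$ regimes and, for small $k$, second-moment versions of the paper's $L^1$ error estimates for $\Delta_N(f_k)$ and $\mathcal{E}_N(f_k)$, which require joint fourth-moment control on the $\lambda_j$ beyond what Proposition~\ref{prop:dallaporta} supplies directly.
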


\begin{remark}
Our bound on $\alpha$ here is likely to be far from optimal. Indeed, the limiting object is in $\mathcal{S}_{-\alpha}$ for any $\alpha>0$. As $\alpha$ is simply a measure of roughness of the field $X_N$ we will not spend energy on optimizing this.
\end{remark}

\begin{remark}
Note that this result makes precise the statement from \cite{rgmrf} that the global spectral fluctuations should be asymptotically "1/f noise" - in particular when one considers the representation of the field in trigonometric coordinates.
\end{remark}

\begin{remark}
It's reasonable to expect that corresponding results might hold for other ensembles as well: e.g. circular ensembles, $\beta$-ensembles, and Wigner matrices. It might also be an interesting question to consider what is the correct object to study in the case where the eigenvalues live in a two-dimensional space such as in the case of the Ginibre ensemble. Another interesting question might be to try to prove such a result directly  for example from the determinantal structure of the eigenvalues $(\lambda_j)$. In \cite{dallaporta,gustavsson} only variances of the eigenvalues are considered, but perhaps one is able to use similar methods to consider other moments as well.
\end{remark}

Let us discuss briefly what convergence in distribution in the space $\mathcal{S}_{-\alpha}$ means. Recall that in general, weak convergence of random variables, say $\xi_n$ to $\xi$, on a Polish space (separable complete metric space) say $S$ (such as $\mathcal{S}_{-\alpha}$) means that $\E(f(\xi_n))\to \E(f(\xi))$ for each bounded continuous function $f:S\to \R$. 

Using this definition is often not so practical for proving convergence. Instead, one makes use of Prohorov's theorem (see e.g. \cite{billingsley}). From Prohorov's theorem, it follows that if the random variables $(\xi_n)$ converge on suitable cylinder sets and the sequence $(\xi_n)$ is tight, then $(\xi_n)$ converges in distribution. Thus to prove our theorem, we'll need to prove convergence on some suitable cylinder sets, and tightness.

\subsection{Asymptotics of linear statistics}

Our main tool will be a result of Johansson (\cite[Theorem 2.4]{johansson}) describing the asymptotic properties of linear statistics of the GUE (in fact more general $\beta$-ensembles). We'll only need a simplified version of the theorem.

\begin{theorem}[Johansson]\label{th:johansson}
Let $f\in C_c^\infty(\R)$. Then as $N\to\infty$,

\begin{equation}
\sum_{j=1}^N f(\lambda_j)-N\int_{-1}^1 f(x)\sigma(x)dx\stackrel{d}{\to} V_f,
\end{equation}

\noindent where $V_f$ is a centered Gaussian random variable with variance

\begin{equation}
\rho_f=\frac{1}{4}\sum_{k=1}^\infty k a_k^2,
\end{equation}

\noindent where 

\begin{equation}
a_k=\frac{2}{\pi}\int_0^\pi f(\cos \theta)\cos k\theta d\theta.
\end{equation}
\end{theorem}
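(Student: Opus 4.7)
The plan is to exploit the determinantal structure of the GUE eigenvalues: the rescaled spectrum $(\lambda_j)_{j=1}^N$ forms a determinantal point process whose correlation kernel $K_N$ is built from Hermite polynomials, converges to $\sigma$ on the diagonal, and to the sine kernel after bulk rescaling. For such a process every cumulant of a linear statistic $L(g)=\sum_j g(\lambda_j)$ is a cycle integral of the kernel; in particular
\begin{equation}
\mathrm{Var}\, L(g) = \tfrac{1}{2}\iint (g(x)-g(y))^2 K_N(x,y)^2\, \d x\, \d y.
\end{equation}
My first step is to reduce to polynomial $f$ by writing $f(\cos\theta)=\tfrac{a_0}{2}+\sum_{k\geq 1}a_k\cos k\theta$ and truncating at degree $M$. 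Because $f\in C_c^\infty$ the coefficients $a_k$ decay faster than any polynomial, so a crude a priori bound $\mathrm{Var}\,L(T_k)\leq Ck$ (which itself follows from the display above and a uniform bound on $K_N$) makes the tail of the Chebyshev series negligible uniformly in $N$.

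Second, I would compute the limit of $\mathrm{Var}\, L(T_k)$ and of the cross-covariances $\mathrm{Cov}(L(T_j),L(T_k))$ by feeding $T_k$ into the integral formula above, applying Christoffel--Darboux, and using the sine-kernel asymptotics of the Hermite kernel to localize the integral to a diagonal neighbourhood. A change of variables on this neighbourhood, combined with the classical Fourier identity $\int_{\R}(\sin u /u)^2 (1-\cos ku)\d u = \pi k /2$ (or its variants), gives $\mathrm{Var}\,L(T_k)\to k/4$ and $\mathrm{Cov}(L(T_j),L(T_k))\to 0$ for $j\neq k$; this is precisely where the Chebyshev polynomials of the first kind enter as the natural diagonalizing basis. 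Bilinearity in $f$ then delivers $\rho_f=\tfrac{1}{4}\sum_k k a_k^2$ as the limiting variance.

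Third, to upgrade second-moment convergence to convergence in distribution I would use the method of moments via cumulants. The $m$-th joint cumulant of $L(T_{k_1}),\dots,L(T_{k_m})$ is a cycle integral of $m$ copies of $K_N$ against a product of factors of the form $T_{k_i}(x_i)-T_{k_i}(x_{i+1})$; localization of $K_N$ to the diagonal makes each such difference $O(|x_i-x_{i+1}|)$ on the support of the integrand, and one gains a factor of $N^{-1}$ for each additional loop beyond the second. This forces all cumulants of order $m\geq 3$ to vanish and gives joint Gaussianity.

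The main obstacle is uniform control of higher cumulants and of variance bounds near the spectral edge, where the sine-kernel asymptotics fail and the Airy regime takes over. I would handle this either by splitting the variance integral into a bulk and an edge region (using rigidity or a deterministic truncation to confine the edge contribution to an interval of vanishing measure) or by exploiting that $T_k$ is bounded on $[-1,1]$ and that linear statistics of the Airy point process contribute only $O(1)$ variance, which is absorbed after dividing by the appropriate normalization. Once the edge is dispatched, the bulk analysis above gives both the variance formula and the vanishing of higher cumulants, completing the proof.
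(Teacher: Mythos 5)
The paper does not prove this statement; it is quoted verbatim as Johansson's theorem and the body of the note simply cites \cite[Theorem 2.4]{johansson}. So the relevant comparison is against Johansson's own argument, which is quite different from yours. Johansson proceeds by perturbing the $\beta$-ensemble potential by a small multiple of $f$, expressing the moment generating function of the centred linear statistic as a ratio of partition functions, and carrying out a steepest-descent / mean-field analysis of those partition functions. That route never uses the determinantal structure and hence applies to all $\beta>0$; the Chebyshev diagonalization appears there not through sine-kernel asymptotics but because the equilibrium-measure variational problem diagonalizes in that basis. Your plan, by contrast, is the Costin--Lebowitz/Soshnikov route: cumulants of a linear statistic of a determinantal process are cycle integrals of the kernel, the variance reduces to $\tfrac12\iint (f(x)-f(y))^2 K_N(x,y)^2\, \d x\, \d y$, sine-kernel localization gives the limiting covariance of the $T_k$'s, and vanishing of higher cumulants gives Gaussianity. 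That strategy is legitimate and well documented for the GUE (indeed \cite{fks}, cited in this very paper, performs exactly the variance computation you describe in their Section 5.2), but it is intrinsically $\beta=2$.

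Two points in your sketch need more care than you give them. First, the a priori bound $\mathrm{Var}\,L(T_k)\leq Ck$ uniformly in $N$ is true but not \emph{crude}: it requires the same edge control you defer to the last paragraph, since $T_k$ is not compactly supported and $K_N(x,y)^2$ integrated near the edge picks up Airy contributions that must be shown to be $O(1)$. You cannot both use this bound to truncate and then ``dispatch the edge'' at the end without a circularity check. Second, the claim that each additional loop in the cumulant cycle integral gains a factor $N^{-1}$ is a heuristic that must be made quantitative; for the sine kernel the correct statement is that the $m$-th cumulant with smooth test functions tends to a finite limit that happens to vanish for $m\geq 3$, and the standard route to this (Soshnikov's combinatorial identity for cumulants, or Breuer--Duits) is more delicate than a naive power-counting in $N$. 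Neither gap is fatal, but as written your third step is the least convincing part of an otherwise coherent plan, and it is precisely the step Johansson's method avoids entirely.
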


Let us point out that the variance can also be written in terms of the Fourier-Chebyshev coefficients (of the second kind) of $f'(x)\sqrt{1-x^2}$.

\begin{lemma}\label{le:var}
Let $f$ be as in Theorem \ref{th:johansson}. Then 

\begin{equation}
\rho_f=\frac{1}{4}\sum_{k=0}^\infty\frac{1}{k+1}s_k(f'(x)\sqrt{1-x^2})^2
\end{equation}
\end{lemma}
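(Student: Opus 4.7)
The plan is to convert both quantities to trigonometric form, then connect them by integration by parts in $\theta$.

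First I would rewrite the target sum in trigonometric coordinates. Setting $g(x) = f'(x)\sqrt{1-x^2}$ and substituting $x = \cos\theta$ (so $dx = -\sin\theta\, d\theta$ and $g(\cos\theta) = f'(\cos\theta)\sin\theta$) in the definition of $s_k(g)$ from Definition \ref{def:chebcoef}, together with the identity $U_k(\cos\theta)\sin\theta = \sin(k+1)\theta$ from Definition \ref{def:cheby}, I get
\begin{equation}
s_k(g) = \frac{2}{\pi}\int_0^\pi f'(\cos\theta)\,\sin\theta\,\sin(k+1)\theta\, d\theta.
\end{equation}

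Next I would integrate $a_k$ by parts for $k \geq 1$. Since $\frac{d}{d\theta}f(\cos\theta) = -f'(\cos\theta)\sin\theta$, and $\sin k\theta$ vanishes at both $0$ and $\pi$, the boundary term drops out and
\begin{equation}
a_k = \frac{2}{\pi}\int_0^\pi f(\cos\theta)\cos k\theta\, d\theta = \frac{2}{\pi k}\int_0^\pi f'(\cos\theta)\,\sin\theta\,\sin k\theta\, d\theta = \frac{1}{k}\,s_{k-1}(g).
\end{equation}
So $k a_k = s_{k-1}(g)$, hence $k a_k^2 = \frac{1}{k}\,s_{k-1}(g)^2$.

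Substituting into Johansson's formula for $\rho_f$ and reindexing $j = k-1$:
\begin{equation}
\rho_f = \frac{1}{4}\sum_{k=1}^\infty k a_k^2 = \frac{1}{4}\sum_{k=1}^\infty \frac{1}{k}\,s_{k-1}(g)^2 = \frac{1}{4}\sum_{j=0}^\infty \frac{1}{j+1}\,s_j(f'(x)\sqrt{1-x^2})^2,
\end{equation}
which is exactly the claimed identity.

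There is no real obstacle here: the hypothesis $f \in C_c^\infty(\R)$ makes $f(\cos\theta)$ smooth in $\theta$, so the single integration by parts is harmless, and everything else is bookkeeping between cosine and sine Fourier series under the identifications $T_k(\cos\theta) = \cos k\theta$ and $U_{k-1}(\cos\theta)\sin\theta = \sin k\theta$. The only thing worth double-checking is the index shift that turns the weight $k$ in Johansson's formula into the weight $1/(k+1)$ after dividing by $k^2$ and relabeling.
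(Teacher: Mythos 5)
Your proof is correct and essentially identical to the paper's: both perform the single integration by parts $\cos k\theta = \frac{1}{k}\frac{d}{d\theta}\sin k\theta$ on $a_k$, identify $\sin k\theta = U_{k-1}(\cos\theta)\sin\theta$, and change variables $x=\cos\theta$ to conclude $a_k = \frac{1}{k}s_{k-1}(f'(x)\sqrt{1-x^2})$, after which the claim follows by substitution into Johansson's formula and reindexing. The only cosmetic difference is that you first write out the trigonometric form of $s_k(g)$ as a standalone step before integrating by parts, whereas the paper folds this into a single chain of equalities.
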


\begin{proof}
In the definition of $a_k$, let us integrate by parts and make a change of variables ($x=\cos \theta$):

\begin{align}
\notag a_k&=\frac{1}{k}\frac{2}{\pi}\int_0^\pi f(\cos \theta)\frac{d}{d\theta}\sin k\theta d\theta\\
&=\frac{1}{k}\frac{2}{\pi}\int_0^\pi f'(\cos \theta)\sin\theta U_{k-1}(\cos \theta)\sin \theta d\theta\\
\notag &=\frac{1}{k}\frac{2}{\pi}\int_{-1}^1f'(x)U_{k-1}(x)\sqrt{1-x^2}dx\\
\notag &=\frac{1}{k}s_{k-1}(f'(x)\sqrt{1-x^2}).
\end{align}

This yields the claim.
\end{proof}

\subsection{Moment bounds}

Another important ingredient of our proof is a bound on moments of the form $\E|\lambda_j-\gamma_j|^k$ for $k=2$ and $k=4$. For $k=2$ these were studied in \cite{dallaporta}, and (as is pointed out in \cite{dallaporta}) the $k=4$ case is proven with essentially identical arguments as the $k=2$ case and we won't give proofs here.

\begin{proposition}\label{prop:dallaporta}
Let $k=2$ or $k=4$.

\begin{itemize}

\item[1.] Let $\delta\in(0,1/2]$. There exists a $C=C(\delta)$ such that for $j\in(\delta N,(1-\delta)N)$

\begin{equation}
\E|\lambda_j-\gamma_j|^k\leq C\frac{(\log N)^{k/2}}{N^k}.
\end{equation}

\item[2.] Let $\delta\in(0,1/2]$ and $K>20\sqrt{2}$. Then there exists a $C=C(\delta,K)$ such that for $j\in[K\log N,\delta N]$ and $j\in[(1-\delta)N,N-K\log N]$

\begin{equation}
\E|\lambda_j-\gamma_j|^k\leq C\frac{(\log \min(j,N-j))^{k/2}}{N^{\frac{2k}{3}}\min(j^{\frac{k}{3}},(N-j)^{\frac{k}{3}})}.
\end{equation}

\item[3.] There exists a $C=C(K)$ such that for $j\in[1,K\log N)$ and $j\in[N-K\log N,N]$,

\begin{equation}
\E|\lambda_j-\gamma_j|^k\leq C\frac{1}{N^{k/2}}.
\end{equation}

\item[4.] Combining these, there exists a $C>0$ such that 

\begin{equation}
\sum_{j=1}^N \E|\lambda_j-\gamma_j|^k\leq C\frac{\log N}{N^{k/2}}.
\end{equation}

\end{itemize}

\end{proposition}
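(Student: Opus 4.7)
Only part 4 requires a new proof in this note, since parts 1--3 are quoted from \cite{dallaporta}; it follows by directly summing the bounds in parts 1--3 over the five index ranges $[1,K\log N)$, $[K\log N,\delta N]$, $(\delta N,(1-\delta)N)$, $[(1-\delta)N,N-K\log N]$, $(N-K\log N,N]$, the last two being mirror images of the first two. I would fix $\delta\in(0,1/2)$ and $K>20\sqrt{2}$ once and for all, bound the contribution from each range separately, and then add up.

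\textbf{Easy contributions.} For the two very-edge ranges, part 3 gives a uniform bound $CN^{-k/2}$ over $O(\log N)$ indices, so the combined contribution is $O(\log N/N^{k/2})$, which already matches the target. For the bulk range, part 1 gives $C(\log N)^{k/2}/N^k$ for each of at most $N$ indices, contributing $(\log N)^{k/2}/N^{k-1}$; this equals $\log N/N$ when $k=2$ (saturating the target) and $(\log N)^2/N^3$ when $k=4$, which is $o(\log N/N^2)$ and hence absorbed into the target.

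\textbf{Intermediate range.} For $j\in[K\log N,\delta N]$ (and symmetrically on the right), part 2 gives $C(\log j)^{k/2}/(N^{2k/3}j^{k/3})$. Bounding $\log j\leq\log N$ and summing yields
\begin{equation}
\sum_{j=K\log N}^{\delta N}\E|\lambda_j-\gamma_j|^k\leq \frac{C(\log N)^{k/2}}{N^{2k/3}}\sum_{j=1}^N j^{-k/3}.
\end{equation}
For $k=2$, $\sum_{j\leq N}j^{-2/3}=O(N^{1/3})$ and the bound becomes $O(\log N\cdot N^{-4/3+1/3})=O(\log N/N)$, exactly the target rate. For $k=4$, $\sum_{j\geq 1}j^{-4/3}=O(1)$ and the bound becomes $O((\log N)^2/N^{8/3})=o(\log N/N^2)$.

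\textbf{Main obstacle.} There is no real obstacle --- the argument is bookkeeping over the five regions. The one subtlety worth highlighting is the intermediate range at $k=2$: there the power $j^{-2/3}$ is only summable on the scale $N^{1/3}$, and it is precisely this $N^{1/3}$ combining with the prefactor $N^{-2k/3}=N^{-4/3}$ that just barely produces the target $\log N/N$. This also explains why the bound in part 4 is essentially driven by the very-edge and intermediate regions for $k=2$ and saturated by the very-edge region for $k=4$.
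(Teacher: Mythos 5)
Your proposal is correct, and it is exactly the bookkeeping the paper leaves implicit (parts 1--3 are imported from \cite{dallaporta}, and part 4 is asserted with the word ``combining''). Fixing $\delta$ and $K$ once, summing part 3 over $O(\log N)$ near-edge indices gives $O(\log N/N^{k/2})$; summing part 1 over $O(N)$ bulk indices gives $(\log N)^{k/2}/N^{k-1}$, which equals the target for $k=2$ and is lower order for $k=4$; and in the intermediate range, $\sum_{j\le N} j^{-2/3}=O(N^{1/3})$ combines with the prefactor $N^{-4/3}$ to give $\log N/N$ at $k=2$, while $\sum_j j^{-4/3}=O(1)$ makes the $k=4$ contribution lower order. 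This matches the stated bound, so nothing further is needed.
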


\begin{remark}
For $k=2$, part 1 of Proposition \ref{prop:dallaporta} is \cite[Theorem 5]{dallaporta}, part 2 of Proposition \ref{prop:dallaporta} is \cite[Theorem 9]{dallaporta}. Part 3 of Proposition \ref{prop:dallaporta} does not appear as a separate result, but its proof (relying on localization results such as \cite[Theorem 2.2]{eyy} or \cite[Corollary 15]{tv}) is part of the proof of \cite[Corollary 4]{dallaporta}.
\end{remark}

\subsection{Estimates for eigenvalues lying outside of the support of the equilibrium measure} We'll need some control on events of the eigenvalues $\lambda_j$ lying outside of $[-1,1]$. In particular, we'll make use of the following ones.

\begin{proposition}\label{prop:density}
Let $\rho_N$ be the density of the mean (normalized) eigenvalue counting measure, i.e. the function satisfying 

\begin{equation}
\sum_{j=1}^N \E f(\lambda_j)=\int_\R f(x)N\rho_N(x)dx
\end{equation}

\noindent for nice enough functions $f$ (see e.g. \cite{agz,ps} for more information).

\begin{itemize}
\item[1.] For $x>1$ 

\begin{align}
\notag N\rho_N(1+x)&=\left(\frac{\Phi'(x)}{4\Phi(x)}-\frac{\gamma'(x)}{\gamma(x)}\right) 2 \mathrm{Ai}(N^{2/3}\Phi(x))\mathrm{Ai}'(N^{2/3}\Phi(x))\\
&\quad + N^{2/3}\Phi'(x)\left(\mathrm{Ai}'(N^{2/3}\Phi(x))^2-N^{2/3}\Phi(x)\mathrm{Ai}(N^{2/3}\Phi(x))^2\right)\\
&\notag \quad +\mathcal{O}\left(\frac{1}{N\sqrt{x-1}}\right),
\end{align}

\noindent where $\mathrm{Ai}$ is the Airy function, 

\begin{equation}
\gamma(x)=\left(\frac{x-1}{x+1}\right)^{1/4}
\end{equation}

\noindent and (again for $x>1$)

\begin{equation}
\Phi(x)=\left(3\int_1^x\sqrt{y^2-1}dx\right)^{2/3}.
\end{equation}

\item[2.] The bound 

\begin{equation}
\rho_N\left(1+\frac{s}{N^{2/3}}\right)\leq \frac{1}{B s N^{1/3}}e^{-bs^{3/2}}
\end{equation}

\noindent is valid for some absolute constants $B$ and $b$ and $s\to \infty$, $n\to\infty$.

\item[3.] The bound 

\begin{equation}
\begin{array}{ccc}
\rho_N(x)\leq C e^{-cN x^2}, & \mathrm{for} & x>(1+\delta)
\end{array}
\end{equation}

\noindent is valid for any $\delta>0$ and some $C<\infty$ and $c>0$ possibly depending on $\delta$.
\end{itemize}
\end{proposition}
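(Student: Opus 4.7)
The plan is to derive all three parts from the Christoffel--Darboux representation of the mean eigenvalue density together with classical Plancherel--Rotach asymptotics for Hermite polynomials. With our normalization $\mathcal{H}=H/\sqrt{2N}$, one has
\begin{equation}
N\rho_N(x)=K_N(x,x)=\sum_{k=0}^{N-1}\psi_k(x)^2,
\end{equation}
where $\psi_k$ are the Hermite functions orthonormal with respect to the weight induced by the GUE. Christoffel--Darboux rewrites this sum as a bilinear combination of $\psi_N(x)$, $\psi_{N-1}(x)$ and their derivatives, and this is the single object on which everything else will act.

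For part 1, I would substitute the Plancherel--Rotach asymptotics in the Airy regime, i.e., for $x>1$ with $x-1$ bounded away from $0$ and from $\infty$. In this regime, both $\psi_N(x)$ and $\psi_{N-1}(x)$ are accurately described, up to a relative error of order $1/N$, by expressions involving $\mathrm{Ai}(N^{2/3}\Phi(x))$ and $\mathrm{Ai}'(N^{2/3}\Phi(x))$ with explicit leading prefactors built from $\gamma(x)$ and $\Phi'(x)$. Expanding the Christoffel--Darboux bracket and collecting products yields the $\mathrm{Ai}\,\mathrm{Ai}'$ cross term and the $\mathrm{Ai}^2$, $(\mathrm{Ai}')^2$ terms as in the displayed formula; the coefficient $\Phi'/(4\Phi)-\gamma'/\gamma$ appears as the logarithmic derivative of the overall Plancherel--Rotach prefactor $\gamma(x)\Phi(x)^{-1/4}$ when one differentiates via the chain rule in $\Phi$. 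The $\mathcal{O}(1/(N\sqrt{x-1}))$ error is inherited from the subleading Plancherel--Rotach terms combined with the behaviour $\Phi(x)^{-1/2}\asymp (x-1)^{-1/2}$ close to the edge.

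Parts 2 and 3 are then relatively routine consequences. For part 2, I would substitute $x=s/N^{2/3}$ into the expansion from part 1. One computes $N^{2/3}\Phi(x)\asymp s$ and $\Phi'(x)/\Phi(x)\asymp N^{2/3}/s$, and inserts the large-argument asymptotics $\mathrm{Ai}(t)\sim (4\pi)^{-1/2}t^{-1/4}e^{-\frac{2}{3}t^{3/2}}$ and $\mathrm{Ai}'(t)\sim -(4\pi)^{-1/2}t^{1/4}e^{-\frac{2}{3}t^{3/2}}$. The $(\mathrm{Ai}')^2-N^{2/3}\Phi\,\mathrm{Ai}^2$ combination cancels to leading order by virtue of the Airy equation, so the dominant contribution is the $\mathrm{Ai}\,\mathrm{Ai}'$ piece, which gives a bound of the form $(sN^{1/3})^{-1}e^{-bs^{3/2}}$ for some absolute $b>0$. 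For part 3, one leaves the Airy regime entirely: standard bounds on Hermite functions in the forbidden region give $\psi_k(x)^2\leq C(\delta)e^{-cNx^2}$ uniformly in $k\leq N$ for $|x|>1+\delta$, and summing over $k<N$ (absorbing the resulting factor of $N$ into the exponential at the cost of slightly shrinking $c$) yields the claim.

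The main obstacle is the bookkeeping in part 1: one must track the precise normalizations of $\psi_N$ and $\psi_{N-1}$ as well as the subleading Plancherel--Rotach corrections in order to identify both the exact coefficient $\Phi'/(4\Phi)-\gamma'/\gamma$ and the $\mathcal{O}(1/(N\sqrt{x-1}))$ error. Parts 2 and 3 are then essentially extractions of standard facts about the Airy function and Hermite functions. All of these calculations are classical; in practice, rather than reproducing the bookkeeping, I would simply invoke the asymptotic analysis of $K_N(x,x)$ available in \cite{agz,ps} and specialize it to the three regimes listed in the proposition.
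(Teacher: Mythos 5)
The paper does not actually prove this proposition; it treats it as a collection of facts drawn from the literature (part 1 is cited from Ercolani--McLaughlin \cite[eq.\ (4.4)]{em}, parts 2 and 3 from Pastur--Shcherbina \cite[Theorem 5.2.3 (ii),(iii)]{ps}). Your final fallback, invoking the asymptotics of $K_N(x,x)$ from \cite{agz,ps}, coincides with what the paper does, so the endpoint is the same. The Christoffel--Darboux / Plancherel--Rotach sketch you give for part 1 is also a fair account of where the cited formula comes from.

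However, your intermediate claim that part 2 is a ``routine consequence'' of part 1 does not go through, and this is a genuine gap. The error term in part 1 is $\mathcal{O}\bigl(1/(N\sqrt{x-1})\bigr)$, which is an \emph{absolute} (not relative) error on $N\rho_N(1+x)$. Substituting $x=s/N^{2/3}$ gives an error $\mathcal{O}(N^{-2/3}s^{-1/2})$ on $N\rho_N$, hence $\mathcal{O}(N^{-5/3}s^{-1/2})$ on $\rho_N$, whereas the target in part 2 is $\rho_N\le (BsN^{1/3})^{-1}e^{-bs^{3/2}}$. As soon as $s\gtrsim(\log N)^{2/3}$ the exponential on the right is \emph{smaller} than the polynomial error inherited from part 1, so part 1 cannot imply part 2 in the regime $s\to\infty$ stated in the proposition. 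This matters for the paper: in the proof of Lemma \ref{le:enfk} the bound is used for $s$ ranging up to $\delta N^{2/3}$. Part 2 is a genuinely sharper statement requiring exponentially controlled error in the Airy scaling window, which is why it must be cited separately (as the paper does) rather than deduced from the edge expansion of part 1. Your argument for part 3, relying on uniform exponential decay of the Hermite functions in the forbidden region, is fine.
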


\begin{remark}
Here part 1 follows from \cite[equation (4.4)]{em} (see also \cite[Proof of Lemma 2.2]{gustavsson}). Part 2 is \cite[Theorem 5.2.3 (ii)]{ps} and part 3 is \cite[Theorem 5.2.3 (iii)]{ps}. We also note that as $\rho_N$ is a even function, such bounds also hold near the point $-1$.
\end{remark}

\section{Some heuristic comments about log-correlated Gaussian fields and random matrix theory}

In this section, we discuss briefly different objects in RMT giving rise to log-correlated Gaussian fields. We also give a brief non-rigorous proof of our main result and point out a few connections between the different objects we discuss here.

\subsection{The logarithm of the absolute value of the characteristic polynomial}

For a large class of random matrix theory models (see for example \cite{fks,hko,rv}), it is known that if $(z_1,...,z_N)$ are the eigenvalues (normalized in a suitable way) and $z$ is a point in the support of the equilibrium measure, 

\begin{equation}
z\mapsto \mathcal{X}_N(x)=\sum_{j=1}^{N}\left(\log |z-z_j|-\E\log |z-z_j|\right)
\end{equation}

\noindent converges (in some sense) to a log-correlated Gaussian field. Such results are closely related to the asymptotically Gaussian fluctuations of the linear statistics. For example, for the CUE, if  $U_N$ is a $N\times N$ Haar distributed unitary matrix,$(e^{i\theta_j})_{j=1}^{N}$ are the eigenvalues of $U_N$, and $e^{i\theta}$ is a fixed point on the unit circle, one has 

\begin{equation}
\sum_{j=1}^{N}\log |e^{i\theta}-e^{i\theta_j}|=\frac{1}{2}\sum_{j=1}^{\infty}\frac{1}{j}\left(\mathrm{Tr}(U_N^{j})e^{-ij\theta}+ \mathrm{Tr}(U_N^{-j})e^{ij\theta}\right).
\end{equation}

It was proven in \cite{ds}, that for any fixed $K\in \Z_+$, $(j^{-1/2}\mathrm{Tr}(U_N^{j}))_{j=1}^{K}$ converge to i.i.d. standard complex Gaussian random variables. This suggests that in this case, $\mathcal{X}_N$ converges (in some sense) to a random Fourier series with coefficients that are independent Gaussians with variance proportional to $j^{-1}$. This is a log-correlated Gaussian field. This was indeed proven in \cite{hko}.

\vspace{0.3cm}

Similar diagonalizations of $\log|z-w|$ can be applied for other ensembles. For the GUE, if $x,y\in[-1,1]$, one has 

\begin{equation}
-\log(2|x-y|)=\sum_{n=1}^{\infty}\frac{2}{n}T_n(x)T_n(y) 
\end{equation}

\noindent and in \cite{fks}, Johansson's result (Theorem \ref{th:johansson}) was used to prove that again in this case $\mathcal{X}_N$ converges to a log-correlated field. 

\vspace{0.3cm}

The linear statistics of the Ginibre ensemble were studied in \cite{rv} and here it was also proven that $\mathcal{X}_N$ can be related to the two-dimensional Gaussian Free Field.

\subsection{The imaginary part of the characteristic polynomial and the eigenvalue counting function}

If $x,y\in \R$, one has (with a suitable choice of the branch of the logarithm)

\begin{equation}
\mathrm{Im}\log (x-y)=\begin{cases}
\pi , & x<y\\
0, & x>y\\
-\infty, & x=y.
\end{cases}
\end{equation}

Thus (suitably interpreted), one has for example for the GUE

\begin{equation}
\mathrm{Im}\sum_{j=1}^{N}\log  (\lambda_j-x)=\pi \sum_{j=1}^{N}\mathbf{1}(\lambda_j<x)
\end{equation}

\noindent if $x\neq \lambda_j$ for all $j$. So the imaginary part of the logarithm of the characteristic polynomial is related to the eigenvalue counting function (or the height function as its called in \cite{bg}). Consider now the centered field 

\begin{equation}
\mathbb{X}_N(x)=\sum_{j=1}^{N}(\mathbf{1}(\lambda_j<x)-\mathbb{P}(\lambda_j<x)).
\end{equation}

As pointed out in \cite{bg}, this is asymptotically a log-correlated Gaussian field (when studied on the support of the equilibrium measure). To see the connection to $X_N$ consider now a test function $f\in C_c^{\infty}(\R)$ and the action of $\mathbb{X}_N$ on $f'$. Integrating by parts we have 

\begin{equation}
\int_\R\mathbb{X}_N(x)f'(x)dx=\sum_{j=1}^{N}(f(\lambda_j)-\E f(\lambda_j)),
\end{equation}

\noindent which is asymptotically a Gaussian random variable. Specializing to the case where $f(x)=(k+1)^{-1}T_{k+1}$  and ignoring the contribution of the eigenvalues outside of the interval $[-1,1]$ we might expect to have for $x\in(-1,1)$

\begin{equation}
\mathbb{X}_N(x)\sim \sum_{k=0}^{\infty}\frac{1}{k+1}\sum_{j=1}^{N}(T_{k+1}(\lambda_j)-\E T_{k+1}(\lambda_j))U_{k-1}(x)\sqrt{1-x^{2}}.
\end{equation}

\subsection{The eigenvalue fluctuation field} 

We'll now give a heuristic proof for the convergence of $X_N$ to a log-correlated Gaussian field. Consider the setting of Theorem \ref{th:johansson}. Another way to write the relevant random variable is (simply by Taylor expanding $f$ around $\gamma_j$ in the two terms)

\begin{align}
\notag \sum_{j=1}^N & f(\lambda_j)-N\int_{\gamma_{j-1}}^{\gamma_j}f(x)\sigma(x)dx\\
&=\sum_{j=1}^N (f(\gamma_j)+f'(\gamma_j)(\lambda_j-\gamma_j)+\mathcal{O}(|\lambda_j-\gamma_j|^2))\\
\notag &\qquad -\sum_{j=1}^N \left(f(\gamma_j)+f'(\gamma_j)N\int_{\gamma_{j-1}}^{\gamma_j}(x-\gamma_j)\sigma(x)dx+\mathit{o}(|\gamma_j-\gamma_{j-1}|)\right)\\
\notag &=\frac{1}{N}\sum_{j=1}^N f'(\gamma_j)\frac{1}{\sigma(\gamma_j)}X_N(\gamma_j)+\mathrm{error},
\end{align}

\noindent where showing the error is small as $N\to\infty$ will require some work. Looking at Definition \ref{def:chebcoef}, the sum above looks like a Riemann sum, so if we assume we can approximate it by the corresponding integral, we have 

\begin{align}
\notag\int_{-1}^1 X_N(y)f'(y)dy&=\int_0^1 f'(G^{-1}(t))\frac{1}{\sigma(G^{-1}(t))}X_N(G^{-1}(t))dt\\
&=\sum_{j=1}^N f(\lambda_j)-N\int_{\gamma_{j-1}}^{\gamma_j}f(x)\sigma(x)dx+\mathrm{error}\\
\notag &=\sum_{j=1}^{N}(f(\lambda_j)-\E f(\lambda_j))+\mathrm{error}.
\end{align}

Again this suggests that 

\begin{equation}
X_N(x)\sim \sum_{k=0}^{\infty}\frac{1}{k+1}\sum_{j=1}^{N}(T_{k+1}(\lambda_j)-\E T_{k+1}(\lambda_j))U_k(x)\sqrt{1-x^{2}}.
\end{equation}

From Theorem \ref{th:johansson} the Fourier-Chebyshev coefficients are asymptotically normally distributed with variance $\rho_{(k+1)^{-1}T_{k+1}}$. Note that using the definition of $s_k$, Lemma \ref{le:var}, and Lemma \ref{le:logdiag}, we have formally (the formal part being not justifying interchanging the order of summation and integration)

\begin{align}
\notag\rho_f&=\frac{1}{4}\sum_{k=0}^\infty\frac{1}{k+1}\frac{2}{\pi}\int_{-1}^1f'(x)\sqrt{1-x^2}U_k(x)dx\frac{2}{\pi}\int_{-1}^1 f'(y)\sqrt{1-y^2}U_k(y)dy\\
&=\frac{1}{\pi^2}\int_{-1}^1\int_{-1}^1 f'(x)f'(y)C(x,y)dxdy.
\end{align}

This suggests that $\pi X_N$ converges to $X$ in some sense, and this calculation is indeed the heart of our argument, but making things precise requires some work.

\vspace{0.3cm}

This also suggests that $X_N$ and $\mathbb{X}_N$ are close to each other in some sense. Indeed, controlling the behavior of $\mathbb{X}_N(x)$ near the edge of the spectrum, one should be able to show that $X_N-\mathbb{X}_N$ tends to zero in $\mathcal{S}_{-\alpha}$, though stronger estimates might be interesting to e.g. compare the maxima of the two fields. While a connection between the fields $X_N$ and $\mathbb{X}_N$ is important for example in \cite{dallaporta,gustavsson}, the author has not run into a statement about the precise connection between the two fields. 

\vspace{0.3cm}

We point out that all of these (asymptotically) log-correlated fields encode the linear statistics related to the Chebyshev polynomials and can be used to calculate linear statistics of reasonably smooth functions.

\section{Proof of Theorem \ref{th:main}}

We will now prove our main theorem. As mentioned before, there are essentially two parts to this - proving convergence on certain cylinder sets and proving tightness. We'll first prove convergence on cylinder sets (without even assuming that $X_N\in \mathcal{S}_{-\alpha}$), and finally tightness where our reasoning is similar to that in \cite{fks}.

\subsection{Convergence on cylinder sets} 
The goal of this part of the proof is to prove the following claim.

\begin{lemma}\label{le:fdd}
Let $(t_0,...,t_K)\in \R^{K+1}$ and as before, let $(Y_k)_{k=0}^K$ be i.i.d. standard Gaussians. Then as $N\to \infty$, 

\begin{equation}
\sum_{k=0}^K t_k s_k(X_N)\stackrel{d}{\to}\frac{1}{\pi}\sum_{k=0}^K t_k \frac{1}{\sqrt{k+1}}Y_k.
\end{equation} 
\end{lemma}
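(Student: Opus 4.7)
The approach is to make rigorous the computation sketched in subsection 3.3: express $\sum_{k=0}^K t_k s_k(X_N)$, up to negligible errors, as a linear statistic of a single polynomial $Q$ to which Johansson's Theorem \ref{th:johansson} applies. Because $T_{k+1}'=(k+1)U_k$, the natural choice is
\begin{equation*}
Q(x):=\sum_{k=0}^K \frac{t_k}{k+1}T_{k+1}(x), \qquad P(x):=Q'(x)=\sum_{k=0}^K t_k U_k(x).
\end{equation*}
Using Definitions \ref{def:field} and \ref{def:chebcoef} and the fact that $X_N$ is piecewise constant on each bin $(\gamma_{j-1},\gamma_j]$, one computes
\begin{equation*}
\sum_{k=0}^K t_k s_k(X_N)=\frac{2}{\pi}\int_{-1}^1 X_N(x)P(x)\,dx=\frac{2}{\pi}\sum_{j=1}^N N\sigma(\gamma_j)(\lambda_j-\mu_j)\bigl(Q(\gamma_j)-Q(\gamma_{j-1})\bigr),
\end{equation*}
where $\mu_j:=N\int_{\gamma_{j-1}}^{\gamma_j}y\sigma(y)\,dy$.

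On the other side, Taylor-expanding $Q$ about $\gamma_j$ inside both $\sum_j Q(\lambda_j)$ and the bin integrals $N\int_{\gamma_{j-1}}^{\gamma_j}Q\sigma$, and using $N(G(\gamma_j)-G(\gamma_{j-1}))=1$, the exact calculation in the heuristic section yields
\begin{equation*}
\sum_{j=1}^N Q(\lambda_j)-N\int_{-1}^1 Q(x)\sigma(x)\,dx=\frac{\pi}{2}\sum_{k=0}^K t_k s_k(X_N)+\mathcal{E}_N,
\end{equation*}
where $\mathcal{E}_N$ collects second-order Taylor remainders together with the mean-value error in replacing $N\sigma(\gamma_j)(Q(\gamma_j)-Q(\gamma_{j-1}))$ by $P(\gamma_j)$. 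It therefore suffices to (i) show $\mathcal{E}_N\to 0$ in probability and (ii) apply Johansson's theorem to $Q$ and identify the limit.

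For (ii) I would multiply $Q$ by a smooth cutoff equal to $1$ on a neighborhood of $[-1,1]$; the difference is a sum over eigenvalues lying outside that neighborhood, whose contribution is negligible by parts 2 and 3 of Proposition \ref{prop:density}. Theorem \ref{th:johansson} then yields convergence to a centered Gaussian of variance $\rho_Q$ computed via Lemma \ref{le:var}. The orthonormality relation $\int_{-1}^1 U_k U_l\sigma\,dx=\delta_{kl}$ from Remark \ref{rem:ortho} gives $s_l(P(x)\sqrt{1-x^2})=t_l\mathbf{1}(l\leq K)$, so $\rho_Q=\tfrac{1}{4}\sum_{k=0}^K \tfrac{t_k^2}{k+1}$. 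Multiplying by $(2/\pi)^2$, the right-hand side converges to a centered Gaussian with variance $\tfrac{1}{\pi^2}\sum_k \tfrac{t_k^2}{k+1}$, exactly the variance of $\tfrac{1}{\pi}\sum_k t_k Y_k/\sqrt{k+1}$. No separate joint-convergence argument is needed because Johansson has been applied directly to an arbitrary finite linear combination.

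The main obstacle is (i). The quadratic Taylor remainder is bounded pointwise by $\sup|Q''|\sum_j(\lambda_j-\gamma_j)^2$, which is $\mathcal{O}(\log N/N)\to 0$ in expectation by part 4 of Proposition \ref{prop:dallaporta}, since $|Q''|$ is uniformly bounded on any compact neighborhood of $[-1,1]$. The mean-value part of $\mathcal{E}_N$ is more delicate: near the edges, $\sigma(\gamma_j)$ is small and the bin widths $\gamma_j-\gamma_{j-1}$ scale like $N^{-2/3}$ at the very edge, so the uniform estimate $N\sigma(\gamma_j)(\gamma_j-\gamma_{j-1})=1+o(1)$ deteriorates. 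I would split the index range into a bulk region (handled by the uniform mean-value bound and part 1 of Proposition \ref{prop:dallaporta}) and thin edge layers of length $\mathcal{O}(\log N)$ (handled by parts 2 and 3 of Proposition \ref{prop:dallaporta} together with the explicit edge behavior of $\sigma$ and $P$), using Cauchy--Schwarz to dominate the resulting cross terms. This edge bookkeeping is expected to be the most technical component of the argument.
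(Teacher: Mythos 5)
Your overall strategy matches the paper's: Taylor-expand a test function about the classical locations, identify the Riemann-sum term with $\int X_N f'$, control the discrepancy using Proposition~\ref{prop:dallaporta} and Lemma~\ref{le:gammalp}, invoke Johansson's theorem via Slutsky, and identify the variance using Lemma~\ref{le:var} and Chebyshev orthogonality. The variance bookkeeping at the end is correct. However, there is one genuine gap in the order in which you apply the cutoff.

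You introduce the cutoff only at step~(ii), when invoking Johansson's theorem, but the place where it is actually needed is step~(i), the Taylor error. In your $\mathcal{E}_N$ the second-order remainder involves $Q''(t_j)$ with $t_j$ lying between $\gamma_j$ and $\lambda_j$. Since $\lambda_j$ has unbounded support, $t_j$ can lie arbitrarily far outside $[-1,1]$, and $Q$ is a polynomial, so $\sup|Q''|$ is infinite over $\R$: the bound ``$\sup|Q''|\sum_j(\lambda_j-\gamma_j)^2$'' with ``$|Q''|$ uniformly bounded on any compact neighborhood'' does not hold as stated. To make this work with a polynomial one must handle the event $\{|\lambda_j|>1\}$ separately using the density bounds of Proposition~\ref{prop:density} --- exactly what the paper is forced to do later, in Lemma~\ref{le:enfk}, where $k$ genuinely grows with $N$. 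Here, though, $K$ is fixed, so the paper takes the cleaner route: it works from the very beginning with an arbitrary $f\in C_c^\infty(\R)$, for which $\|f''\|_\infty<\infty$ and the Taylor error is controlled immediately by \eqref{eq:ebound}; only at the very end does it choose $f$ to be any $C_c^\infty$ function whose derivative agrees with $\frac{2}{\pi}\sum_k t_k U_k$ on $[-1,1]$. Since both the Riemann sum and $\rho_f$ depend only on $f$ restricted to $[-1,1]$, nothing outside that interval needs to be tracked. If you move your cutoff to the front and then Taylor-expand, your argument coincides with the paper's. A secondary remark: the ``edge bookkeeping'' you anticipate for the mean-value discrepancy is not necessary at fixed $K$; the paper bounds $\Delta_N(f)$ globally using only Proposition~\ref{prop:dallaporta} (part~4) and Lemma~\ref{le:gammalp} with $p=2,3$, without any edge/bulk split or Cauchy--Schwarz.
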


Before proving this, we'll prove a simple result concerning the distances between the points $(\gamma_j)_j$.

\begin{lemma}\label{le:gammalp}
For $1<p\leq 3$,

\begin{equation}
\sum_{j=1}^N |\gamma_j-\gamma_{j-1}|^p\leq C N^{1-p}
\end{equation}

\noindent for some $C>0$.

\end{lemma}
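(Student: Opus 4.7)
The plan is to realize each gap $\gamma_j-\gamma_{j-1}$ as an integral via the inverse function theorem, apply Jensen's inequality to push the $p$-th power inside, and then reduce the double sum to a single explicit integral over $[-1,1]$.

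First, since $G:[-1,1]\to[0,1]$ is a $C^1$ bijection with $G'=\sigma$ on the interior, its inverse satisfies $(G^{-1})'(t)=1/\sigma(G^{-1}(t))$, so
\[
\gamma_j-\gamma_{j-1} \;=\; \int_{(j-1)/N}^{j/N}\frac{dt}{\sigma(G^{-1}(t))}.
\]
Applying Jensen's inequality to the convex function $u\mapsto u^p$ on this interval of length $1/N$ gives
\[
(\gamma_j-\gamma_{j-1})^p \;\leq\; \frac{1}{N^{p-1}}\int_{(j-1)/N}^{j/N}\frac{dt}{\sigma(G^{-1}(t))^{p}}.
\]
Summing over $j=1,\dots,N$ and substituting $x=G^{-1}(t)$, $dt=\sigma(x)\,dx$, I obtain the one-line bound
\[
\sum_{j=1}^{N}(\gamma_j-\gamma_{j-1})^p \;\leq\; N^{1-p}\int_{-1}^{1}\sigma(x)^{1-p}\,dx.
\]

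To finish I would check integrability of the right-hand side: since $\sigma(x)^{1-p}=(2/\pi)^{1-p}(1-x^2)^{(1-p)/2}$, the integral is a finite Beta-type integral precisely when $(1-p)/2>-1$, i.e.\ $1<p<3$, and in that range $C=(2/\pi)^{1-p}B(\tfrac12,\tfrac{3-p}{2})$ does the job.

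The main obstacle is the borderline case $p=3$: here $\sigma(x)^{-2}$ fails to be integrable on $[-1,1]$ (the integral blows up logarithmically near $\pm 1$), so pure Jensen is too blunt. To close this case I would split the sum at $|\gamma_j|<1-\delta$. On the bulk piece $\sigma$ is bounded below by a positive constant and the Jensen argument above applies with a uniform constant. On the edge piece I would use the explicit asymptotic $1-\gamma_{N-k}\asymp(k/N)^{2/3}$, which follows from $1-G(1-u)\sim \tfrac{4\sqrt2}{3\pi}u^{3/2}$ near $\pm 1$; this reduces the edge contribution to a telescoping-type sum of the form $N^{-2(p-1)/3}\cdot N^{-(p-1)/3}\sum_k (k^{2/3}-(k-1)^{2/3})^{p}$, which for $p<3$ converges uniformly in $N$ to a constant and thus matches the Jensen bound (at $p=3$ the sum carries an unavoidable $\log N$ factor, so strictly speaking a careful reading of the statement for the endpoint may require absorbing such a factor into the constant or treating the $p=3$ case as a limit).
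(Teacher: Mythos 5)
Your Jensen's inequality reduction is a genuinely cleaner route than the paper's. The paper instead establishes explicit two-sided estimates $1+\gamma_j\asymp(j/N)^{2/3}$ near the edge, uses the mean-value identity $N\sigma(x_j)(\gamma_j-\gamma_{j-1})=1$ to bound each gap individually, and then sums those bounds. Your one-line reduction
\[
\sum_{j=1}^N(\gamma_j-\gamma_{j-1})^p\;\leq\;N^{1-p}\int_{-1}^1\sigma(x)^{1-p}\,dx
\]
gives the claim immediately for all $1<p<3$, and it isolates exactly where the statement lives or dies: integrability of $\sigma^{1-p}$.

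You are also right to flag $p=3$, and in fact the lemma as stated is off by a logarithm there. Since $\gamma_j-\gamma_{j-1}\geq cN^{-2/3}j^{-1/3}$ for $j$ near the left edge (this follows from the same $(j/N)^{2/3}$ asymptotics), one has $\sum_j(\gamma_j-\gamma_{j-1})^3\geq cN^{-2}\log N$, contradicting the claimed $CN^{-2}$. The paper's own proof glosses over this: after reducing to $N^{-2p/3}\sum_{m<N/2}m^{-p/3}$ (the printed exponent $-p/2$ should be $-p/3$, and the printed version actually yields only $N^{-p/2}$ for $p>2$, which is weaker than $N^{1-p}$), the sum is $O(N^{1-p/3})$ only for $p<3$ and is $\asymp\log N$ at $p=3$, which the final display $CN^{1-p}(1+N^{p/3-1})$ silently drops. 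Fortunately the paper only ever invokes $p=3$ to extract $\sup_j|\gamma_j-\gamma_{j-1}|\leq CN^{-2/3}$, which follows directly from the individual gap estimates rather than the $\ell^p$ sum, so the endpoint deficiency is harmless downstream.

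Two small slips in your sketch of the edge fix: the prefactor in front of $\sum_k(k^{2/3}-(k-1)^{2/3})^p$ should be $N^{-2p/3}$, not $N^{-(p-1)}$, and that sum does \emph{not} converge for $p<3$; it grows like $N^{1-p/3}$ because $(k^{2/3}-(k-1)^{2/3})^p\sim(2/3)^p k^{-p/3}$. The two errors compensate in the product $N^{-2p/3}\cdot N^{1-p/3}=N^{1-p}$, so you still land on the right answer, but each intermediate claim is wrong as written. Your bottom-line conclusion that $p=3$ carries an unavoidable $\log N$ is correct.
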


\begin{proof}[Proof of Lemma \ref{le:gammalp}]
We'll first prove the following estimates: 

\begin{equation}\label{eq:gammab1}
-1+\left(\frac{j}{N}\right)^{\frac{2}{3}}\leq \gamma_j\leq -1+2\left(\frac{j}{N}\right)^{\frac{2}{3}}
\end{equation}

\noindent for $j<N/2$ and 

\begin{equation}\label{eq:gammab2}
1-\left(\frac{N-j}{N}\right)^{\frac{2}{3}}\geq \gamma_j\geq 1-2\left(\frac{N-j}{N}\right)^{\frac{2}{3}}
\end{equation}

\noindent for $j\geq N/2$.

\vspace{0.3cm}

Let us begin with the upper bound in \eqref{eq:gammab1} (note that for $j<N/2$, $-1+2(j/N)^{2/3}<1$):

\begin{align}
\notag \int_{-1}^{-1+2\left(\frac{j}{N}\right)^{\frac{2}{3}}}\sigma(x)dx&=\frac{2}{\pi}\int_{-1}^{-1+2\left(\frac{j}{N}\right)^{\frac{2}{3}}}\sqrt{1-x}\sqrt{1+x}dx\\
&\geq \frac{2}{\pi}\sqrt{2-2\left(\frac{j}{N}\right)^{\frac{2}{3}}}\int_{-1}^{-1+2\left(\frac{j}{N}\right)^{\frac{2}{3}}}\sqrt{1+x}dx\\
\notag &\geq \frac{2}{\pi}\sqrt{2-2\left(\frac{1}{2}\right)^{\frac{2}{3}}}\int_{0}^{2\left(\frac{j}{N}\right)^{\frac{2}{3}}}\sqrt{x}dx\\
\notag &=\frac{2}{\pi}\sqrt{2-2\left(\frac{1}{2}\right)^{\frac{2}{3}}}\frac{2}{3} 2^{\frac{3}{2}}\frac{j}{N}.
\end{align}

The numerical factor here is greater than one (approximately $1.03$) so we have the upper bound for $j<N/2$. For the lower bound, note that 

\begin{align}
\notag \int_{-1}^{-1+\left(\frac{j}{N}\right)^{\frac{2}{3}}}\sigma(x)dx&\leq \frac{2\sqrt{2}}{\pi}\int_{-1}^{-1+\left(\frac{j}{N}\right)^{\frac{2}{3}}}\sqrt{1+x}dx\\
&\leq \frac{2\sqrt{2}}{\pi}\frac{2}{3}\frac{j}{N}\\
\notag &\leq \frac{j}{N}.
\end{align}

The argument for $j\geq N/2$ is similar apart for the fact that one uses the identity (following from Definition \ref{def:semic})

\begin{equation}
\int_{\gamma_j}^1\sigma(x)dx=\frac{N-j}{N}.
\end{equation}

The next ingredient of the proof is the remark that for some point $x_j\in[\gamma_{j-1},\gamma_j]$

\begin{equation}
N\sigma(x_j)(\gamma_j-\gamma_{j-1})=N\int_{\gamma_{j-1}}^{\gamma_j}\sigma(x)dx=1
\end{equation}

\noindent so in particular, 

\begin{equation}\label{eq:gammad}
(\gamma_j-\gamma_{j-1})\leq \frac{1}{N}\max\left(\frac{1}{\sigma(\gamma_{j-1})},\frac{1}{\sigma(\gamma_j)}\right).
\end{equation}

We split the sum $\sum_j |\gamma_j-\gamma_{j-1}|^p$ into the term $j=1$, the sums $1< j<N/2$ and $N/2\leq j\leq N-1$ and the term $j=N$. For the term $j=1$ \eqref{eq:gammab1} implies that :

\begin{equation}
|\gamma_1+1|^p\leq 2^p\left(\frac{1}{N}\right)^{\frac{2p}{3}}.
\end{equation}

For the sum over $1<j<N/2$ we have by \eqref{eq:gammad} and \eqref{eq:gammab1}

\begin{align}
\notag\sum_{j=2}^{\lfloor N/2\rfloor}|\gamma_j-\gamma_{j-1}|^p&\leq N^{-p}\sum_{j=2}^{\lfloor N/2\rfloor} \frac{1}{\sigma(\gamma_{j-1})^p}\\
&\leq \frac{2\sqrt{2}}{\pi}N^{-p}\sum_{j=2}^{\lfloor N/2\rfloor}(1+\gamma_{j-1})^{-\frac{p}{2}}\\
\notag &\leq \frac{2^{\frac{3}{2}}}{\pi}N^{-p}\sum_{j=2}^{\lfloor N/2\rfloor}\left(\frac{j-1}{N}\right)^{-\frac{p}{2}}\\
\notag&\leq C N^{1-p}
\end{align}

\noindent for some $C>0$. With a similar argument for $j\geq N/2$, we have 

\begin{equation}
\sum_{j=1}^N|\gamma_j-\gamma_{j-1}|^p\leq C \left(N^{-\frac{2p}{3}}+N^{1-p}\right)=C N^{1-p}\left(1+N^{\frac{p}{3}-1}\right).
\end{equation}

Which gives the claim as $p\leq 3$.

\end{proof}

\begin{proof}[Proof of Lemma \ref{le:fdd}]
We argue as in our heuristic proof, but with a bit more detail. Consider first an arbitrary $f\in C^\infty_c(\R)$ and let us look at the statement of Theorem \ref{th:johansson} a bit more carefully. We Taylor expand $f(\lambda_j)$ around $\gamma_j$ to second order as well as write the integral as a sum of integrals over $[\gamma_{j-1},\gamma_j]$ and on each of these, we Taylor expand $f(x)$ in the integral around $\gamma_j$ to second order. We thus obtain (using Definition \ref{def:semic})

\begin{align}
\notag \sum_{j=1}^N& f(\lambda_j)-N\int_{-1}^1f(x)\sigma(x)dx\\
\notag &=\sum_{j=1}^N\left(f(\gamma_j)+f'(\gamma_j)(\lambda_j-\gamma_j)+\frac{1}{2}f''(t_j)(\lambda_j-\gamma_j)^2\right)\\
\notag &\quad -\sum_{j=1}^N N\int_{\gamma_{j-1}}^{\gamma_j}\left(f(\gamma_j)+f'(\gamma_j)(x-\gamma_j)+\frac{1}{2}f''(u_{j,x})(x-\gamma_j)^2\right)\sigma(x)dx\\
&=\sum_{j=1}^Nf'(\gamma_j)\left(\gamma_j-N\int_{\gamma_{j-1}}^{\gamma_{j}} x\sigma(x)dx\right)+\mathcal{E}_N(f)\\
\notag &=\frac{1}{N}\sum_{j=1}^N f'(\gamma_j)\frac{1}{\sigma(\gamma_j)}X_N(\gamma_j)+\mathcal{E}_N(f),
\end{align}

\noindent where $t_j$ is a point between $\lambda_j$ and $\gamma_j$, $u_{j,x}$ is a point between $x$ and $\gamma_j$, when $x\in[\gamma_{j-1},\gamma_j]$, and 

\begin{equation}\label{eq:en}
\mathcal{E}_N(f)=\sum_{j=1}^N\left(\frac{1}{2}f''(t_j)(\lambda_j-\gamma_j)^2+\frac{N}{2}\int_{\gamma_{j-1}}^{\gamma_j}f''(u_{j,x})(x-\gamma_j)^2\sigma(x)dx\right).
\end{equation}

Note that as $f\in C^\infty_c(\R)$, $||f''||_{\infty}$ is finite so (using Definition \ref{def:semic} for the second summand)

\begin{equation}\label{eq:ebound}
|\mathcal{E}_N(f)|\leq C\sum_{j=1}^N \left(|\lambda_j-\gamma_j|^2+|\gamma_j-\gamma_{j-1}|^2\right).
\end{equation}

\noindent for some $C$ depending only on $f$. By Proposition \ref{prop:dallaporta} and Lemma \ref{le:gammalp}, we see that as $N\to\infty$, 

\begin{equation}
\E|\mathcal{E}_N(f)|\to 0
\end{equation}

\noindent so $\mathcal{E}_N(f)\stackrel{d}{\to} 0$ as $N\to\infty$.

\vspace{0.3cm}

Let us now return to the term

\begin{equation}
\frac{1}{N}\sum_{j=1}^Nf'(\gamma_j)\frac{1}{\sigma(\gamma_j)}X_N(\gamma_j).
\end{equation}

As in our heuristic argument, we want to see this as a Riemann sum (though the integrand depends on $N$ through $X_N$) so let us estimate its distance from the corresponding integral. We have 

\begin{align}\label{eq:dn}
\Delta_N(f):&=\notag \frac{1}{N}\sum_{j=1}^N\frac{f'(\gamma_j)}{\sigma(\gamma_j)}X_N(\gamma_j)-\int_0^1 \frac{f'(G^{-1}(t))}{\sigma(G^{-1}(t))}X_N(G^{-1}(t))dt\\
\notag &=\sum_{j=1}^N\frac{X_N(\gamma_j)}{N}\left( \frac{f'(\gamma_j)}{\sigma(\gamma_j)}-N\int_{\frac{j-1}{N}}^{\frac{j}{N}}\frac{f'(G^{-1}(t))}{\sigma(G^{-1}(t))}dt\right)\\
&=\sum_{j=1}^N\left(\lambda_j-N\int_{\gamma_{j-1}}^{\gamma_j}x\sigma(x)dx\right)\left( f'(\gamma_j)-N\sigma(\gamma_j)\int_{\gamma_{j-1}}^{\gamma_j}f'(y)dy\right)\\
\notag &=\sum_{j=1}^N\left(\lambda_j-N\int_{\gamma_{j-1}}^{\gamma_j}x\sigma(x)dx\right) f'(\gamma_j)\left(1-N\sigma(\gamma_j)(\gamma_j-\gamma_{j-1})\right)\\
\notag &\quad -\sum_{j=1}^N \left(\lambda_j-N\int_{\gamma_{j-1}}^{\gamma_j}x\sigma(x)dx\right) N\sigma(\gamma_j)\int_{\gamma_{j-1}}^{\gamma_j}f''(w_j)(y-\gamma_j)dy\\
\notag &=:\Delta_N^{(1)}(f)+\Delta_N^{(2)}(f).
\end{align}

Let us first note that for $\Delta_{N}^{(1)}(f)$ we have the bound

\begin{align}
\notag\E|\Delta_{N}^{(1)}(f)|&\leq \sup_{x\in[-1,1]}|f'(x)|\sum_{j=1}^N\left(\E|\lambda_j-\gamma_j|+\left|\gamma_j-N\int_{\gamma_{j-1}}^{\gamma_j}x\sigma(x)dx\right|\right)\\
&\qquad\times N\int_{\gamma_{j-1}}^{\gamma_j}|\sigma(x)-\sigma(\gamma_j)|dx.
\end{align}

\noindent We begin by noting that Proposition \ref{prop:dallaporta}, part 4 implies that 

\begin{equation}\label{eq:dn1}
\sup_j \E|\lambda_j-\gamma_j|\leq \sqrt{\sup_j \E|\lambda_j-\gamma_j|^2}\leq C\sqrt{\frac{\log N}{N}}.
\end{equation}

Similarly, Lemma \ref{le:gammalp} (with $p=3$) implies that 

\begin{equation}\label{eq:dn2}
\sup_j \left|\gamma_j-N\int_{\gamma_{j-1}}^{\gamma_j}x\sigma(x)dx\right|\leq \sup_j |\gamma_j-\gamma_{j-1}|\leq C N^{-\frac{2}{3}}.
\end{equation}

For the last term we have (again using Lemma \ref{le:gammalp} with $p=3$)

\begin{align}\label{eq:dn3}
\notag N\int_{\gamma_{j-1}}^{\gamma_j}|\sigma(x)-\sigma(\gamma_j)|dx&=N\frac{2}{\pi}\int_{\gamma_{j-1}}^{\gamma_j}\frac{|x-\gamma_j|(x+\gamma_j)}{\sqrt{1-x^2}+\sqrt{1-\gamma_j^2}}dx\\
&\leq CN (\gamma_j-\gamma_{j-1})\int_{\gamma_{j-1}}^{\gamma_j}\frac{1}{\sqrt{1-x^2}}dx\\
\notag &\leq C N^{1/3}\int_{\gamma_{j-1}}^{\gamma_j}\frac{1}{\sqrt{1-x^2}}dx.
\end{align}

Combining \eqref{eq:dn1}, \eqref{eq:dn2}, and \eqref{eq:dn3} we find (for some constant independent of $f$)

\begin{equation}\label{eq:delta1}
\E|\Delta_N^{(1)}(f)|\leq C \sup_{x\in[-1,1]}|f'(x)|N^{-\frac{1}{6}}\sqrt{\log N}\int_{-1}^1\frac{1}{\sqrt{1-x^2}}dx\to 0 
\end{equation}

\noindent as $N\to \infty$. We keep that $f'$ term here instead of absorbing it into the constant for an argument we'll need when proving tightness. We conclude that $\Delta_N^{(1)}(f)\stackrel{d}{\to} 0$ as $N\to \infty$.

\vspace{0.3cm}

For $\Delta_N^{(2)}(f)$ we find (with a slightly simpler argument - using Lemma \ref{le:gammalp} with $p=2$)

\begin{align}\label{eq:delta2}
\notag \E|\Delta_N^{(2)}(f)|&\leq C\sup_{x\in[-1,1]}|f''(x)|\sqrt{\frac{\log N}{N}}N\sum_{j=1}^N\sigma(\gamma_j)(\gamma_j-\gamma_{j-1})^2\\
&\leq C \sup_{x\in[-1,1]}|f''(x)|\sqrt{N\log N}\sum_{j=1}^N |\gamma_j-\gamma_{j-1}|^2\\
&\notag \leq C \sup_{x\in[-1,1]}|f''(x)|\sqrt{\frac{\log N}{N}}
\end{align}

\noindent for some constant independent of $f$. 

\vspace{0.3cm}

We conclude that $\Delta_N(f)\stackrel{d}{\to} 0$ as $N\to \infty$ so by Slutsky's theorem and Theorem \ref{th:johansson}, we find that as $N\to \infty$

\begin{equation}
\int_0^1 \frac{f'(G^{-1}(t))}{\sigma(G^{-1}(t))}X_N(G^{-1}(t))dt=\int_{-1}^1 f'(y)X_N(y)dy
\end{equation}

\noindent converges in law to a centered Gaussian random variable with variance (see Lemma \ref{le:var})

\begin{equation}
\rho_f=\frac{1}{4}\sum_{k=0}^\infty\frac{1}{k+1}s_k(f'(x)\sqrt{1-x^2})^2.
\end{equation}

Let us now choose our function $f\in C^\infty_c(\R)$ in a suitable way. Let us take $f'$ so that for $x\in [-1,1]$ we have 

\begin{equation}\label{eq:der}
f'(x)=\frac{2}{\pi}\sum_{k=0}^K t_k U_k(x).
\end{equation}

Thus 

\begin{equation}
\int_{-1}^1 X_N(y)f'(y)dy=\sum_{k=0}^K t_k s_k(X_N).
\end{equation}

Let us then choose $f$ to be any $C^\infty_c(\R)$ function whose derivative on $[-1,1]$ is given by \eqref{eq:der}. We note that for such a function

\begin{equation}
s_k(f'(x)\sqrt{1-x^2})=\sum_{j=0}^K t_j\frac{4}{\pi^2}\int_{-1}^1 U_j(x)U_k(x)\sqrt{1-x^2}dx=\frac{2}{\pi}t_k\mathbf{1}(k\leq K)
\end{equation}

\noindent and 

\begin{equation}
\rho_f=\frac{1}{\pi^2}\sum_{k=0}^K \frac{1}{k+1}t_k^2
\end{equation}

\noindent which is precisely the variance of the normal random variable

\begin{equation}
\frac{1}{\pi}\sum_{k=0}^K \frac{1}{\sqrt{k+1}}t_k Y_k
\end{equation}

\noindent so we are done.

\end{proof}

\subsection{Tightness}

Let us first outline how we'll prove tightness (following \cite{hko}). The basic result we'll use is that one can prove that the unit ball of $\mathcal{S}_{-\alpha'}$ is a compact subset of $\mathcal{S}_{-\alpha}$ for $\alpha>\alpha'$. Thus an application of Markov's inequality will prove tightness if one can prove that $\E||X_N||_{-\alpha'}^p<\infty$ for some $p>0$. In \cite{hko,fks} the exponent $p$ is taken to be 2, but we'll find it more convenient to take $p=1$. $p=2$ would be a natural choice if one had good bounds for the covariance of $X_N$, but to the author's knowledge these don't exist.

To estimate $\E||X_N||_{-\alpha'}$, we note that 

\begin{equation}\label{eq:tight}
||X_N||_{-\alpha'}=\sqrt{\sum_{k=0}^\infty s_k(X_N)^2 (1+k^2)^{-\alpha'}}\leq \sum_{k=0}^\infty |s_k(X_N)|(1+k^2)^{-\frac{\alpha'}{2}}
\end{equation}

\noindent so we'll need polynomial (in $k$) growth bounds on $\E|s_k(X_N)|$ to get boundedness for some $\alpha'>0$. In fact, our argument will consist of two parts and will be slightly roundabout. Our first bound will be useful only when $k$ is large enough (comparable to some power of $\log N$) and will be a simple application of Proposition \ref{prop:dallaporta}.

\begin{lemma}\label{le:bigk}
There exists a constant $C>0$ such that 

\begin{equation}
\E|s_k(X_N)|^2\leq C(k+1)^2\log N.
\end{equation}

\end{lemma}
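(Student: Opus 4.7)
The plan is to substitute the definition of $X_N$ directly into $s_k(X_N)$ and bound $U_k$ crudely on $[-1,1]$. Writing $m_j := N\int_{\gamma_{j-1}}^{\gamma_j} y\sigma(y)\, dy$, we have
$$s_k(X_N) = \frac{2}{\pi} \sum_{j=1}^N N\sigma(\gamma_j)(\lambda_j - m_j) \int_{\gamma_{j-1}}^{\gamma_j} U_k(x)\, dx.$$
The elementary bound $|U_k(\cos\theta)| = |\sin((k+1)\theta)|/|\sin\theta| \leq k+1$ (which follows by induction on $k$ from the sine addition formula) gives $\bigl|\int_{\gamma_{j-1}}^{\gamma_j} U_k(x)\, dx\bigr| \leq (k+1)(\gamma_j - \gamma_{j-1})$.

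The next step is a weighted Cauchy--Schwarz applied to the above sum. Writing $w_j := N\sigma(\gamma_j)(\gamma_j - \gamma_{j-1})$, the previous bound yields
$$|s_k(X_N)|^2 \leq C(k+1)^2 \left(\sum_{j=1}^N w_j\right) \left(\sum_{j=1}^N w_j \,|\lambda_j - m_j|^2\right).$$
Before exploiting this I need that $w_j \leq C$ uniformly in $j$ and $N$, so that $\sum_j w_j \leq CN$ and the factor $w_j$ in the second sum can be absorbed into a constant. This uniform bound is the main (minor) technical wrinkle: by the mean value theorem applied to $N\int_{\gamma_{j-1}}^{\gamma_j}\sigma(y)\,dy = 1$, one has $N\sigma(y_j^*)(\gamma_j - \gamma_{j-1}) = 1$ for some $y_j^* \in [\gamma_{j-1},\gamma_j]$, and one needs to check $\sigma(\gamma_j)/\sigma(y_j^*) \leq C$ even near the edges. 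The edge estimates \eqref{eq:gammab1}--\eqref{eq:gammab2} from the proof of Lemma \ref{le:gammalp} show $\gamma_j - \gamma_{j-1} \asymp N^{-2/3} j^{-1/3}$ and $\sigma(\gamma_j) \asymp (j/N)^{1/3}$ near $-1$ (with symmetric behavior at $+1$), so $w_j \asymp 1$ near the edges, while in the bulk $w_j \leq C$ is immediate from $\sigma(\gamma_j) \asymp \sigma(y_j^*)$.

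Granted uniform boundedness of $w_j$, we obtain $|s_k(X_N)|^2 \leq C(k+1)^2 N \sum_j |\lambda_j - m_j|^2$. The triangle inequality gives $|\lambda_j - m_j|^2 \leq 2|\lambda_j - \gamma_j|^2 + 2(\gamma_j - \gamma_{j-1})^2$, and taking expectations, Proposition \ref{prop:dallaporta} part 4 together with Lemma \ref{le:gammalp} (with $p=2$) yields
$$\sum_{j=1}^N \E|\lambda_j - m_j|^2 \leq C \frac{\log N}{N}.$$
Putting everything together gives $\E|s_k(X_N)|^2 \leq C(k+1)^2 \log N$, as claimed.

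I expect no serious obstacles beyond verifying the uniform bound on $w_j$, which is really bookkeeping using the edge asymptotics already established. All the machinery needed is in place: the bound $|U_k|\leq k+1$ is elementary, the moment estimate is Proposition \ref{prop:dallaporta}, and the geometric estimate on the spacings $\gamma_j - \gamma_{j-1}$ is Lemma \ref{le:gammalp}.
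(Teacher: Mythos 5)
Your proof is correct, and it takes a slightly more economical route than the paper's. Both arguments start by expanding $s_k(X_N)=\frac{2}{\pi}\sum_j N\sigma(\gamma_j)(\lambda_j-m_j)\int_{\gamma_{j-1}}^{\gamma_j}U_k$ and use $\sup_{[-1,1]}|U_k|\leq k+1$ to peel off the $(k+1)^2$ factor. Where they diverge is in the Cauchy--Schwarz step. The paper keeps the double sum in $j,l$, applies Cauchy--Schwarz to $\E[(\lambda_j-m_j)(\lambda_l-m_l)]$, and then must estimate the resulting single sum $\sum_j\sigma(\gamma_j)\sqrt{\E|\lambda_j-m_j|^2}\,(\gamma_j-\gamma_{j-1})$ by splitting into the three regimes of Proposition~\ref{prop:dallaporta} (edge with $j\lesssim\log N$, intermediate, and bulk), using each part separately together with the sharp spacing asymptotics $\gamma_j-\gamma_{j-1}\lesssim N^{-2/3}j^{-1/3}$. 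You instead apply a weighted Cauchy--Schwarz directly to the single sum $\sum_j w_j|\lambda_j-m_j|$ with weights $w_j=N\sigma(\gamma_j)(\gamma_j-\gamma_{j-1})$, which --- once the uniform bound $w_j\leq C$ is established --- lets you plug in the \emph{aggregate} bound $\sum_j\E|\lambda_j-\gamma_j|^2\leq C\log N/N$ from part 4 of Proposition~\ref{prop:dallaporta} (together with Lemma~\ref{le:gammalp}, $p=2$), entirely bypassing the sector-by-sector case analysis. The price is that you need the uniform bound on $w_j$. Your sketch of that bound is right: for $j\geq2$, \eqref{eq:gammad} gives $w_j\leq\sigma(\gamma_j)/\sigma(\gamma_{j-1})$, and \eqref{eq:gammab1}--\eqref{eq:gammab2} show this ratio is $O(1)$ since $(1+\gamma_j)/(1+\gamma_{j-1})\leq 2(j/(j-1))^{2/3}$ near the left edge (and symmetrically on the right); for $j=1$, one computes directly $w_1=\tfrac{2}{\pi}N\sqrt{1-\gamma_1}\,(1+\gamma_1)^{3/2}\leq\tfrac{8}{\pi}$ using $(1+\gamma_1)\leq 2N^{-2/3}$. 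It would strengthen the writeup to state this explicitly rather than as a remark, since the uniform boundedness of $w_j$ is the hinge that allows the shortcut. With that filled in, your argument is complete and arguably cleaner than the paper's.
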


\begin{proof}
Writing out the definition of $s_k(X_N)$ and using a simple Cauchy-Schwarz and other naive estimates we find

\begin{align}
\notag \E|s_k(X_N)|^2&=\sum_{j=1}^N\sum_{l=1}^N \frac{4N^2}{\pi^2}\sigma(\gamma_j)\sigma(\gamma_l)\int_{\gamma_{j-1}}^{\gamma_j}U_k(x)dx \int_{\gamma_{l-1}}^{\gamma_l}U_k(x)dx\\
&\qquad\times \E\left[\left(\lambda_j-N\int_{\gamma_{j-1}}^{\gamma_j}x\sigma(x)dx\right)\left(\lambda_l-N\int_{\gamma_{l-1}}^{\gamma_l}x\sigma(x)dx\right)\right]\\
\notag &\leq C N^2\left(\sup_{x\in[-1,1]}|U_k(x)|\right)^2\\
\notag &\qquad \times\left(\sum_{j=1}^N\sigma(\gamma_j)\sqrt{\E\left|\lambda_j-N\int_{\gamma_{j-1}}^{\gamma_j}x\sigma(x)dx\right|^2}(\gamma_j-\gamma_{j-1})\right)^2.
\end{align}

Note that as $U_k(x)=\frac{1}{k+1}T_{k+1}'(x)$, Proposition \ref{prop:markov} implies that 

\begin{equation}
\sup_{x\in[-1,1]}|U_k(x)|\leq k+1.
\end{equation}

For the expectation, we remark that

\begin{equation}
\E\left|\lambda_j-N\int_{\gamma_{j-1}}^{\gamma_j}x\sigma(x)dx\right|^2\leq 2\E|\lambda_j-\gamma_j|^2+2(\gamma_j-\gamma_{j-1})^2.
\end{equation}

Let us now split the sum into the different sectors indicated by Proposition \ref{prop:dallaporta}. Consider first $j\leq K\log N$. We have by \eqref{eq:gammab1},  Lemma \ref{le:gammalp}, and Proposition \ref{prop:dallaporta} part 3

\begin{align}
\notag \sum_{j=1}^{\lfloor K\log N\rfloor}&\sigma(\gamma_j)\sqrt{\E|\lambda_j-\gamma_j|^2+(\gamma_j-\gamma_{j-1})^2}(\gamma_j-\gamma_{j-1})\\
\qquad &\leq C\sum_{j=1}^{\lfloor K\log N\rfloor}\left(\frac{j}{N}\right)^{\frac{1}{3}}\sqrt{\frac{1}{N}} N^{-\frac{2}{3}}\\
\notag &\leq C N^{-\frac{3}{2}}(\log N)^{\frac{4}{3}}.
\end{align}

We get the same bound for $N-j\leq K\log N$. For $K\log N<j\leq \delta N$, we need a little bit more care for estimating $\gamma_j-\gamma_{j-1}$. As we have already argued, we have for some $x_j\in[\gamma_{j-1},\gamma_j]$

\begin{equation}
N\sigma(x_j)(\gamma_j-\gamma_{j-1})=1
\end{equation}

\noindent which implies that for $K\log N<j\leq \delta N$, there exists a universal constant $C$ such that 

\begin{equation}
(\gamma_j-\gamma_{j-1})\leq \frac{1}{N\sigma(\gamma_{j-1})}\leq C N^{-\frac{2}{3}} j^{-\frac{1}{3}}.
\end{equation}

Combining this with part 2 of Proposition \ref{prop:dallaporta} we find

\begin{align}
\notag \sum_{j=\lfloor K\log N\rfloor +1}^{\lfloor \delta N\rfloor }&\sigma(\gamma_j)\sqrt{\E|\lambda_j-\gamma_j|^2+(\gamma_j-\gamma_{j-1})^2}(\gamma_j-\gamma_{j-1})\\
\qquad &\leq C\sum_{j=\lfloor K\log N\rfloor +1}^{\lfloor \delta N\rfloor }\left(\frac{j}{N}\right)^{\frac{1}{3}}\sqrt{\log j N^{-\frac{4}{3}}j^{-\frac{2}{3}}+N^{-\frac{4}{3}}j^{-\frac{2}{3}}} N^{-\frac{2}{3}}j^{-\frac{1}{3}}\\
\notag &\leq  CN^{-\frac{5}{3}}\sum_{j=\lfloor K\log N\rfloor +1}^{\lfloor \delta N\rfloor }\sqrt{\log j}j^{-\frac{1}{3}}\\
\notag &\leq C\sqrt{\log N} N^{-1}.
\end{align}

A similar bound holds for $K\log N\leq N-j\leq \delta N$. Finally for $\delta N<j<(1-\delta)N$ we have using part 1 of Proposition \ref{prop:dallaporta} (and noting that $\gamma_j-\gamma_{j-1}\leq C N^{-1}$)

\begin{align}
\notag \sum_{N\delta <j<N(1-\delta)}&\sigma(\gamma_j)\sqrt{\E|\lambda_j-\gamma_j|^2+(\gamma_j-\gamma_{j-1})^2}(\gamma_j-\gamma_{j-1})\\
&\leq C\sqrt{\log N} N^{-1}.
\end{align}

Putting the different estimates together yields our claim.

\end{proof}

For a small $k$ estimate of $\E|s_k(X_N)|$, we go back to our argument for convergence on cylinder sets, and make use of variance bounds for linear statistics of the GUE. Let us write $f_k=(k+1)^{-1}T_{k+1}$. Recall that we had 

\begin{align}
\frac{\pi}{2}\notag s_k(X_N)&=\int_{-1}^1 X_N(x)f_k'(x)dx\\
&=\sum_{j=1}^N f(\lambda_j)-N\int_{-1}^1 f_k(x)\sigma(x)dx-\Delta_N(f_k)-\mathcal{E}_N(f_k)
\end{align}

Let us begin our analysis for small $k$ by estimating $\Delta_N(f_k)$ first. 

\begin{lemma}\label{le:delta}
There exists an $\epsilon>0$ and a $C>0$ such that for $k\leq \sqrt{\log N}$

\begin{equation}
\E|\Delta_N(f_k)|\leq C N^{-\epsilon}.
\end{equation}

\end{lemma}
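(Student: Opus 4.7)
The plan is to apply directly the bounds \eqref{eq:delta1} and \eqref{eq:delta2} established in the proof of Lemma \ref{le:fdd} to the specific function $f_k=(k+1)^{-1}T_{k+1}$, and then control the sup-norms $\sup_{x\in[-1,1]}|f_k'(x)|$ and $\sup_{x\in[-1,1]}|f_k''(x)|$ using Markov's inequality for Chebyshev polynomials (Proposition \ref{prop:markov}). The two key observations making this work are: (i) the constants in \eqref{eq:delta1} and \eqref{eq:delta2} were carefully kept independent of $f$ (the paper explicitly flagged this for later reuse) and depend on $f$ only through the sup-norms of $f'$ and $f''$ on $[-1,1]$, so $f_k$ need not be compactly supported since the relevant integrals live in $[-1,1]$ and the intermediate points $w_j,\gamma_j$ lie in $[-1,1]$; (ii) Markov's inequality gives polynomial-in-$k$ bounds on these sup-norms.

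First I would note that, since $T_{k+1}'=(k+1)U_k$, we have $f_k'=U_k$, and by Proposition \ref{prop:markov} applied with $n=k+1$,
\begin{equation*}
\sup_{x\in[-1,1]}|f_k'(x)|=\sup_{x\in[-1,1]}|U_k(x)|\leq k+1,
\end{equation*}
while
\begin{equation*}
\sup_{x\in[-1,1]}|f_k''(x)|\leq \frac{1}{k+1}T_{k+1}''(1)=\frac{1}{k+1}\cdot\frac{(k+1)^2((k+1)^2-1)}{3}\leq \frac{(k+1)^3}{3}.
\end{equation*}

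Next, inserting these two sup-norm bounds into \eqref{eq:delta1} and \eqref{eq:delta2} yields
\begin{equation*}
\E|\Delta_N^{(1)}(f_k)|\leq C(k+1)N^{-1/6}\sqrt{\log N},\qquad \E|\Delta_N^{(2)}(f_k)|\leq C(k+1)^3 N^{-1/2}\sqrt{\log N}.
\end{equation*}
For $k\leq\sqrt{\log N}$ one has $(k+1)^3\leq C(\log N)^{3/2}$, so both right-hand sides are bounded by $C N^{-1/6}(\log N)^2$. Since any polylogarithmic factor can be absorbed into $N^{-\epsilon}$ for any $\epsilon<1/6$, we obtain the claim with, say, $\epsilon=1/7$.

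There is essentially no obstacle here: the whole step is a bookkeeping exercise once one notices that the proof of Lemma \ref{le:fdd} already produced bounds with explicit polynomial dependence on $\sup|f'|$ and $\sup|f''|$, and that Proposition \ref{prop:markov} converts these into polynomial-in-$k$ losses. The only thing to double-check is that the derivation of \eqref{eq:delta1} and \eqref{eq:delta2} never used compact support of $f$, but only sup-norms on $[-1,1]$ together with Proposition \ref{prop:dallaporta} and Lemma \ref{le:gammalp}; inspection of that derivation confirms this. The restriction $k\leq\sqrt{\log N}$ is precisely what keeps the $(k+1)^3$ factor from overwhelming the $N^{-1/2}$ decay in $\Delta_N^{(2)}$.
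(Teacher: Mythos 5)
Your proof is correct and follows essentially the same route as the paper: insert the $f$-independent bounds \eqref{eq:delta1} and \eqref{eq:delta2} with $f=f_k$, use Proposition~\ref{prop:markov} to get $\sup_{[-1,1]}|f_k'|\leq k+1$ and $\sup_{[-1,1]}|f_k''|\leq(k+1)^3/3$, then observe that for $k\leq\sqrt{\log N}$ these polynomial-in-$k$ losses are polylogarithmic in $N$ and hence swallowed by $N^{-1/6}$. Your explicit remark that the derivation of \eqref{eq:delta1}--\eqref{eq:delta2} only uses sup-norms on $[-1,1]$ (so compact support of $f$ plays no role there) is a point the paper leaves implicit, and is worth having spelled out.
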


\begin{proof}
We already found (see \eqref{eq:delta1} and \eqref{eq:delta2} that 

\begin{equation}
 \E\left|\Delta_N(f_k)\right|\leq C\sup_{x\in[-1,1]}|f_k'(x)|N^{-1/6}\sqrt{\log N}+C\sup_{x\in[-1,1]}|f_k''(x)|\sqrt{\frac{\log N}{N}}.
\end{equation}

Now making use of Proposition \ref{prop:markov}, we have 

\begin{equation}
\frac{1}{k+1}\sup_{x\in[-1,1]}|T_{k+1}'(x)|\leq k+1 
\end{equation}

\noindent and 

\begin{equation}
\frac{1}{k+1}\sup_{x\in[-1,1]}|T_{k+1}''(x)|\leq (k+1)\frac{k^2+2k}{3}.
\end{equation}

We conclude (as $k\leq \sqrt{\log N}$) that for some constants $C>0$ and $\epsilon>0$.  

\begin{equation}
\E|\Delta_N(f_k)|\leq C N^{-\epsilon}.
\end{equation}

\end{proof}

Next we prove a bound  for $\E|\mathcal{E}_N(f_k)|$, though the proof is more involved.

\begin{lemma}\label{le:enfk}
There exist constants $C>0$ and $\epsilon>0$ such that for $k\leq \sqrt{\log N}$,

\begin{equation}
\E|\mathcal{E}_N(f_k)|\leq C N^{-\epsilon}.
\end{equation}

\end{lemma}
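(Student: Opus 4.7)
My plan is to decompose $\mathcal{E}_N(f_k)$ according to the two summands in \eqref{eq:en} as $\mathcal{E}_N(f_k)=A_N+B_N$, where $A_N$ is the Taylor-remainder sum involving $f_k''(t_j)$ and $B_N$ is the sum involving the integrals of $f_k''(u_{j,x})$, and to bound each separately. The main subtlety is that the intermediate point $t_j$ in $A_N$ can lie outside $[-1,1]$, where $f_k''$ is no longer controlled by the Markov bound (Proposition \ref{prop:markov}). This is handled using the density tail bounds of Proposition \ref{prop:density} together with Lemmas \ref{le:tder} and \ref{le:tbound}.

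For $B_N$, the intermediate point $u_{j,x}$ always lies in $[\gamma_{j-1},\gamma_j]\subset[-1,1]$, so Proposition \ref{prop:markov} applied to $f_k=T_{k+1}/(k+1)$ yields $|f_k''(u_{j,x})|\leq C(k+1)^3$. Combined with $\sigma\leq 1$ and Lemma \ref{le:gammalp} with $p=3$, this gives
\begin{equation*}
|B_N|\leq C(k+1)^3\sum_{j=1}^{N} N(\gamma_j-\gamma_{j-1})^3\leq C(k+1)^3/N,
\end{equation*}
which for $k\leq\sqrt{\log N}$ is at most $C(\log N)^{3/2}/N\leq CN^{-\epsilon}$.

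For $A_N$, I introduce a threshold $\tau=(\log N)^{-1}$ and split according to whether $|\lambda_j|\leq 1+\tau$. On the bulk event $|\lambda_j|\leq 1+\tau$ one has $|t_j|\leq 1+\tau$, and combining Lemma \ref{le:chebyder} (which together with $T_n(-x)=(-1)^n T_n(x)$ gives $|T_n''(x)|=T_n''(|x|)$ for $|x|\geq 1$) with Lemmas \ref{le:tder} and \ref{le:tbound} yields
\begin{equation*}
|f_k''(t_j)|\leq C(k+1)^3 e^{2(k+1)\sqrt{\tau}}\leq C(k+1)^3,
\end{equation*}
since $(k+1)\sqrt{\tau}\leq 1$ for $k\leq\sqrt{\log N}$. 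Proposition \ref{prop:dallaporta} part 4 then bounds the bulk contribution by $C(k+1)^3\log N/N\leq C(\log N)^{5/2}/N$. On the tail event $|\lambda_j|>1+\tau$, I use $(\lambda_j-\gamma_j)^2\leq 4\lambda_j^2$ together with $|f_k''(t_j)|\leq C(k+1)^3 e^{2(k+1)\sqrt{|\lambda_j|-1}}$ and rewrite the expectation as an integral against $N\rho_N$:
\begin{equation*}
\sum_j\E\bigl[|f_k''(t_j)|(\lambda_j-\gamma_j)^2\mathbf{1}(|\lambda_j|>1+\tau)\bigr]\leq C(k+1)^3\int_{|x|>1+\tau}x^2 e^{2(k+1)\sqrt{|x|-1}}N\rho_N(x)\,dx.
\end{equation*}

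The hard part is this tail integral, where one must verify that $e^{2(k+1)\sqrt{|x|-1}}$ does not overwhelm the decay of $\rho_N$. I split it into $(1+\tau,2]$, using Proposition \ref{prop:density} part 2 after the change of variables $s=N^{2/3}(|x|-1)$, and $|x|>2$, using part 3. The threshold $\tau$ is chosen so that on $(1+\tau,2]$ one has $s\geq N^{2/3}/\log N$, which is large enough for part 2 to apply; moreover on this range the inequality $2(k+1)\sqrt{s}/N^{1/3}\leq bs^{3/2}/2$ holds comfortably for $k\leq\sqrt{\log N}$, so the factor $e^{-bs^{3/2}}$ absorbs the exponential in $k$ and leaves a superpolynomially small contribution (of order $\exp(-cN/(\log N)^{3/2})$). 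The region $|x|>2$ contributes $\mathcal{O}(e^{-cN})$ by part 3. Summing the three pieces yields $\E|\mathcal{E}_N(f_k)|\leq CN^{-\epsilon}$ for some $\epsilon>0$.
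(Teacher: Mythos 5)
Your proof is correct, and it takes a genuinely different route from the paper's. Both arguments split $\mathcal{E}_N(f_k)$ into the same two sums and treat the integral sum the same way, and both split the sum over $j$ into a ``bulk'' and a ``tail'' contribution. The difference is in where the split happens and how the tail is controlled. The paper splits at $|\lambda_j|>1$, then uses a Cauchy--Schwarz decoupling $\E[|f_k''(\lambda_j)|(\lambda_j-\gamma_j)^2\mathbf{1}]\leq(\sum_l\E|f_k''(\lambda_l)|^2\mathbf{1})^{1/2}\sum_j(\E|\lambda_j-\gamma_j|^4)^{1/2}$, and because the integral against $N\rho_N$ runs all the way from $x=1$, it must invoke the Airy asymptotics of Proposition \ref{prop:density} part 1 on the layer $[1,1+N^{-1/2}]$. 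You instead shift the threshold to $1+\tau$ with $\tau=(\log N)^{-1}$, which is small enough that on $[-1-\tau,1+\tau]$ the combination of Markov (Proposition \ref{prop:markov}) and Lemmas \ref{le:tder}--\ref{le:tbound} still gives a polynomial-in-$k$ bound $|f_k''|\leq C(k+1)^3$, and large enough that after rescaling $s=N^{2/3}(|x|-1)\geq N^{2/3}/\log N$ the decay $e^{-bs^{3/2}}$ from part 2 of Proposition \ref{prop:density} crushes $e^{2(k+1)\sqrt{s}/N^{1/3}}$, leaving a superpolynomially small tail. This lets you dispense entirely with the Airy-layer estimate (part 1) and with the fourth-moment Cauchy--Schwarz step, using instead the elementary bound $(\lambda_j-\gamma_j)^2\leq 4\lambda_j^2$ so that the whole tail becomes a single density integral. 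The paper's tail contribution ends up of order $\log N/N$, whereas yours is $\exp(-cN/(\log N)^{3/2})$; both comfortably give $N^{-\epsilon}$.

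Two small remarks. First, with $\tau=(\log N)^{-1}$ and $k\leq\sqrt{\log N}$ one has $(k+1)\sqrt{\tau}\leq 1+(\log N)^{-1/2}\leq 2$ rather than $\leq 1$, but this changes only the constant. Second, applying part 2 of Proposition \ref{prop:density} all the way out to $|x|=2$ (i.e.\ $s$ up to $N^{2/3}$) stretches that edge-scaling bound a bit beyond where it is usually stated; the cleaner choice is to split at $|x|=1+\delta$ for a small fixed $\delta$ and let part 3 take over from there, exactly as in the paper. This is a cosmetic fix, not a gap.
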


\begin{proof}
From \eqref{eq:en} we see that for some point $t_j$ between $\lambda_j$ and $\gamma_j$, 

\begin{equation}
|\mathcal{E}_N(f_k)|\leq \frac{1}{2}\sum_{j=1}^N |f_k''(t_j)|(\lambda_j-\gamma_j)^2+\frac{1}{2}\sup_{x\in[-1,1]}|f_k''(x)|\sum_{j=1}^N(\gamma_j-\gamma_{j-1})^2.
\end{equation}

Then by Lemma \ref{le:gammalp} and our above bound for $|f_k''(x)|$ we see that the second term is bounded by $Ck^3 N^{-1}$ for some constant $C$. Let us thus estimate the first term. For this, we use the following bound 

\begin{align}\label{eq:e9}
\notag \sum_{j=1}^N |f_k''(t_j)|(\lambda_j-\gamma_j)^2&\leq \sup_{x\in[-1,1]}|f_k''(x)|\sum_{j=1}^N (\lambda_j-\gamma_j)^2\\
&\qquad +\sum_{j=1}^N |f_k''(t_j)|(\lambda_j-\gamma_j)^2\mathbf{1}(|\lambda_j|>1).
\end{align}

By part 4 of Proposition \ref{prop:dallaporta}, the expectation of the first term here can be bounded by $C k^3 \log N/N\leq C N^{-\epsilon}$ for some $\epsilon>0$. To estimate the last term, we first note that as $t_j$ is between $\gamma_j$ and $\lambda_j$, Lemma \ref{le:chebyder} (and using the fact that $x\mapsto|T_n^{(k)}(x)|$ is even for all $n,k$) as well as Proposition \ref{prop:markov} imply that 

\begin{equation}\label{eq:e8}
|f_k''(t_j)|\mathbf{1}(|\lambda_j|>1)\leq |f_k''(\lambda_j)|\mathbf{1}(|\lambda_j|>1).
\end{equation}

Then by Cauchy-Schwarz we find

\begin{align}\label{eq:e7}
\notag \E\sum_{j=1}^N &|f_k''(\lambda_j)|(\lambda_j-\gamma_j)^2\mathbf{1}(|\lambda_j|>1)\\
&\leq \sum_{j=1}^N\sqrt{\E|f_k''(\lambda_j)|^2\mathbf{1}(|\lambda|_j>1)}\sqrt{\E|\lambda_j-\gamma_j|^4}\\
\notag &\leq \sqrt{\sum_{j=1}^N\E|f_k''(\lambda_j)|^2\mathbf{1}(|\lambda_j|>1)}\sum_{j=1}^N \sqrt{\E|\lambda_j-\gamma_j|^4}.
\end{align}

Now from Proposition \ref{prop:dallaporta} we find for some $C>0$

\begin{align}\label{eq:e6}
\notag \sum_{j=1}^N \sqrt{\E|\lambda_j-\gamma_j|^4}&\leq 2C\sum_{j<K\log N}\frac{1}{N}+2C\sum_{K\log N\leq j\leq \delta N}\frac{\log j}{N^{4/3} j^{2/3}}\\
&\qquad+2C\sum_{\delta N<j\leq N/2}\frac{\log N}{N^2}\\
\notag &=C\frac{\log N}{N}.
\end{align}

Thus we are left with estimating 

\begin{equation}\label{eq:e5}
\sum_{j=1}^N \E|f_k''(\lambda_j)|^2 \mathbf{1}(|\lambda_j|>1)=N\int_{|x|>1}|f_k''(x)|^2 \rho_N(x)dx.
\end{equation}

Now combining Lemma \ref{le:tder} and Lemma \ref{le:tbound} (as well as using the fact that we have even functions), we find

\begin{equation}\label{eq:e4}
N\int_{|x|>1}|f_k''(x)|^2 \rho_N(x)dx\leq 2N(k+1)^6\int_1^\infty e^{4k\sqrt{x-1}}\rho_N(x)dx.
\end{equation}

We now wish to make use of Proposition \ref{prop:density}. To do this, let $\delta_N=N^{-1/2}$ (we could actually get away here with taking $\delta_N$ some small positive constant $\delta$, but we'll need this argument later on). We'll use part 1 of Proposition \ref{prop:density} for estimating the above integral from $1$ to $1+\delta_N$. So more precisely,

\begin{align}\label{eq:ints}
\notag &\int_{1}^{1+\delta_N}e^{4k\sqrt{x-1}}N\rho_N(x)dx\\
\notag &\quad =\int_{1}^{1+\delta_N}e^{4k\sqrt{x-1}}\left(\frac{\Phi'(x)}{4\Phi(x)}-\frac{\gamma'(x)}{\gamma(x)}\right) 2 \mathrm{Ai}(N^{2/3}\Phi(x))\mathrm{Ai}'(N^{2/3}\Phi(x))dx\\
&\qquad + \int_1^{1+\delta_N}e^{4k\sqrt{x-1}}N^{2/3}\Phi'(x)\\
\notag &\qquad \qquad \times\left(\mathrm{Ai}'(N^{2/3}\Phi(x))^2-N^{2/3}\Phi(x)\mathrm{Ai}(N^{2/3}\Phi(x))^2\right)dx\\
&\notag \qquad +\int_1^{1+\delta_N}e^{4k\sqrt{x-1}}\mathcal{O}\left(\frac{1}{N\sqrt{x-1}}\right)dx.
\end{align}

For $x\geq 0$, $\mathrm{Ai}(x)$ and $\mathrm{Ai}'(x)$ are bounded. Moreover, (as mentioned in \cite{fks})

\begin{equation}
\frac{\Phi'(x)}{4\Phi(x)}-\frac{\gamma'(x)}{\gamma(x)}
\end{equation}

\noindent is bounded near $x=1$ so we see that the first integral can be bounded by 

\begin{equation}
C\int_1^{1+\delta_N} e^{4k\sqrt{x-1}}dx\leq C e^{4k \sqrt{\delta_N}}.
\end{equation}

For the second integral in \eqref{eq:ints}, we estimate $e^{4k\sqrt{x-1}}$ upwards by $e^{4k\sqrt{\delta_N}}$ and then as in \cite{fks}, the integral can be evaluated by the substitution $u=N^{2/3}\Phi'(x)$ and as the Airy function is bounded, one finds

\begin{align}
&\int_1^{1+\delta_N}e^{4k\sqrt{x-1}}N^{2/3}\Phi'(x)\left(\mathrm{Ai}'(N^{2/3}\Phi(x))^2-N^{2/3}\Phi(x)\mathrm{Ai}(N^{2/3}\Phi(x))^2\right)dx\\
&\leq C e^{4k\sqrt{\delta_N}}.
\end{align}

Finally for the last part, one notes that 

\begin{equation}
\frac{1}{N}\int_1^{1+\delta_N} e^{4k\sqrt{x-1}}\frac{1}{\sqrt{x-1}}dx=\frac{1}{N}\int_0^{\sqrt{\delta_N}}e^{4ky}dy\leq \frac{1}{Nk}e^{4k\sqrt{\delta_N}}.
\end{equation}

Putting things together, we find 

\begin{equation}\label{eq:e3}
\int_1^{1+\delta_N}e^{4k\sqrt{x-1}}N\rho_N(x)dx\leq C e^{4k\sqrt{\delta_N}}
\end{equation}

\noindent which is bounded for $k\leq \sqrt{\log N}$.

\vspace{0.3cm}

Fix now some $\delta>0$ independent of $N$ and consider the integral over $[1+\delta_N,1+\delta]$. Here we want to use part 2 of Proposition \ref{prop:density}. First of all, we have

\begin{align}
\notag\int_{1+\delta_N}^{1+\delta}e^{4k\sqrt{x-1}}N\rho_N(x)dx&=\int_{\delta_N}^{\delta} e^{4k\sqrt{x}}N\rho_N(1+x)dx\\
&=N^{1/3}\int_{N^{2/3}\delta_N}^{\delta N^{2/3}} e^{4k\sqrt{s} N^{-1/3}}\rho_N(1+s N^{-2/3})ds.
\end{align}

Now as $N^{2/3}\delta_N\to \infty$ as $N\to\infty$, we find by part 2 of Proposition \ref{prop:density} (and recalling $k\leq \sqrt{\log N}$)

\begin{align}\label{eq:e2}
\notag \int_{1+\delta_N}^{1+\delta}e^{4k\sqrt{x-1}}N\rho_N(x)dx&\leq \int_{N^{2/3}\delta_N}^{\delta N^{2/3}} e^{4k\sqrt{s} N^{-1/3}}\frac{1}{B s}e^{-b s^{3/2}}ds\\
&\leq C N^{-1/3}\int_{N^{1/3}}^{\delta N^{2/3}}e^{\sqrt{s}(4k N^{-1/3}-bs)}ds\\
&\notag \leq C N^{-1/3}\int_{N^{1/3}}^{\delta N^{2/3}} e^{-b'\sqrt{s}}ds\\
&\notag \to 0
\end{align}

\noindent as $N\to \infty$. 

\vspace{0.3cm}

Finally by part 3 of Proposition \ref{prop:density}, we find

\begin{equation}\label{eq:e1}
\int_{1+\delta}^{\infty}e^{4k\sqrt{x-1}}N\rho_N(x)dx\leq N\int_{1+\delta}^{\infty}e^{4k\sqrt{x-1}}C e^{-cN x^{2}}dx\to 0
\end{equation}

\noindent as $N\to \infty$. Collecting everything (i.e. \eqref{eq:e9}, \eqref{eq:e8}, \eqref{eq:e7}, \eqref{eq:e6}, \eqref{eq:e5}, \eqref{eq:e4}, \eqref{eq:e3}, \eqref{eq:e2} and \eqref{eq:e1})

\begin{equation}
\E|\mathcal{E}_N(f_k)|\leq C N^{-\epsilon}
\end{equation}

\noindent for some $C,\epsilon>0$ independent of $k$ and $N$.

\end{proof}

To estimate $\sum_{j=1}^{N}f_k(\lambda_j)-N\int_{-1}^{1}f_k(x)\sigma(x)dx$, we'll want to replace the integral by the expectation of the sum. While this is not completely trivial as $f_k$ can depend on $N$, and in particular, have an unbounded derivative, it is still a simple proof we'll present here, though similar things might exist in the literature already.

\begin{lemma}\label{le:els}
For $k\leq \sqrt{\log N}$,

\begin{equation}
N\int_{-1}^{1}f_k(x)\sigma(x)dx-\sum_{j=1}^{N}\E f_k(\lambda_j)\to 0
\end{equation}

\noindent as $N\to \infty$. 
\end{lemma}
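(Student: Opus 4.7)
The approach is to express $\sum_j \E f_k(\lambda_j) = \int_\R f_k(x) N\rho_N(x) dx$ using the defining property of $\rho_N$, so that the claim reduces to showing
\begin{equation*}
\int_\R f_k(x) \bigl(N\rho_N(x) - N\sigma(x)\mathbf{1}_{[-1,1]}(x)\bigr) dx \to 0.
\end{equation*}
Both measures $N\rho_N$ and $N\sigma\mathbf{1}_{[-1,1]}$ have total mass $N$, so
\begin{equation*}
\int_\R \bigl(N\rho_N(x) - N\sigma(x)\mathbf{1}_{[-1,1]}(x)\bigr) dx = 0.
\end{equation*}
Using the symmetry $\mathcal{H} \eqlaw -\mathcal{H}$ of the GUE, it suffices to handle the right half of the line; there, mass conservation lets us replace $f_k(x)$ by $f_k(x) - f_k(1)$, gaining a Lipschitz factor which will be essential for closing the estimate.

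The plan is then to split $\R$ into a microscopic edge region $|x-1|\leq \delta_N$, an intermediate region $\delta_N < |x-1|\leq \delta$ for some fixed small $\delta>0$, a tail $|x-1|>\delta$, and the deep bulk $[-1+\delta, 1-\delta]$. In each piece we use
\begin{equation*}
|f_k(x)-f_k(1)| \leq |x-1| \sup_{y \in [1-\delta,\, x\vee 1]} |f_k'(y)|,
\end{equation*}
together with the bound $|f_k'(y)|=|U_k(y)|\leq (k+1)\, e^{2(k+1)\sqrt{(y-1)_+}}$, which follows from Proposition \ref{prop:markov}, Lemma \ref{le:chebyder}, and Lemma \ref{le:tbound}. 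For $k\leq \sqrt{\log N}$ and $\delta_N$ tending to zero slowly enough (for instance $\delta_N = N^{-1/2}$, as in Lemma \ref{le:enfk}), this supremum remains polynomially bounded in $k$, so the three parts of Proposition \ref{prop:density} — applied exactly as in the proof of Lemma \ref{le:enfk} — control the near-edge, intermediate, and tail contributions. In the deep bulk, standard Plancherel--Rotach-type estimates give $\rho_N-\sigma = \mathcal{O}(1/N)$ with extra cancellation when integrated against smooth test functions; combined with $\|f_k\|_{\infty,[-1,1]}\leq 1/(k+1)$ and the polynomial-in-$k$ bounds on $\|f_k'\|_{\infty,[-1,1]}, \|f_k''\|_{\infty,[-1,1]}$ from Proposition \ref{prop:markov}, this region contributes a polynomially small quantity in $N$.

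The main obstacle is precisely the edge region. A naive bound of the outside integral gives $N\int_{|x|>1}|f_k|\rho_N \leq \|f_k\|_{\infty,[-1,1]}\cdot \mathcal{O}(1) = \mathcal{O}(1/(k+1))$, which does \emph{not} tend to zero for fixed $k$; and the corresponding near-edge contribution from the inside of the spectrum is of the same order. The decisive step is the use of the zero-mass identity to subtract $f_k(1)$, after which these two contributions must be handled \emph{together} as a single Lipschitz estimate: the gained factor $|x-1|\leq \delta_N$ compensates the $\mathcal{O}(1)$ edge mass of $N\rho_N + N\sigma\mathbf{1}_{[-1,1]}$ against the $(k+1)$-growth of $\sup|f_k'|$, yielding a total edge contribution of order $\delta_N\cdot(k+1) = o(1)$ for $k\leq \sqrt{\log N}$.
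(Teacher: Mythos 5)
Your proposal takes a genuinely different route from the paper, and the step you wave through as "standard" is exactly the crux of the matter. The paper proves Lemma~\ref{le:els} without any pointwise density asymptotics: it expands $f_k = \frac{1}{k+1}T_{k+1}$ in monomials, uses the exact semicircle moments (Catalan numbers) on one side and the Harer--Zagier recursion for $\E\,\mathrm{Tr}\,\mathcal{H}^{2j}$ on the other, and compares the two polynomial expressions term by term, obtaining the explicit bound $C\sqrt{k+1}\,e^{2k}/N$, which is $o(1)$ uniformly for $k\le\sqrt{\log N}$. No Plancherel--Rotach estimates are needed, and nothing has to be controlled in the bulk.

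The genuine gap in your argument is the bulk term. The assertion that "$\rho_N-\sigma=\mathcal{O}(1/N)$ with extra cancellation when integrated against smooth test functions" is, up to routine reductions, \emph{equivalent} to the statement you are trying to prove: pointwise $N\rho_N-N\sigma$ is of order $1$ in the bulk and dominated by an oscillating term, so without the ``extra cancellation'' the bulk contributes $\mathcal{O}(1)$, not $o(1)$. Establishing that the oscillating correction integrates to $o(1)$ against $f_k-f_k(1)$ \emph{uniformly} for $k\le\sqrt{\log N}$ requires either uniform Plancherel--Rotach asymptotics for the Hermite functions with explicit error terms, or a non-stationary-phase argument with careful bookkeeping of the polynomial-in-$k$ growth of the derivatives of $f_k$. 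None of this is cited or sketched, and Proposition~\ref{prop:density}, which you lean on, only addresses $|x|>1$. A secondary, though non-fatal, imprecision: the mass of $N\sigma\mathbf{1}_{[-1,1]}$ in the window $|x-1|\le\delta_N$ with $\delta_N=N^{-1/2}$ is of order $N\delta_N^{3/2}=N^{1/4}$, not $\mathcal{O}(1)$; the Lipschitz factor still yields $(k+1)N^{-1/4}=o(1)$, so the edge estimate survives, but your stated contribution $\delta_N(k+1)$ has the wrong exponent. Similarly the ``naive bound'' $N\int_{|x|>1}|f_k|\rho_N\le\|f_k\|_{\infty,[-1,1]}\cdot\mathcal{O}(1)$ is not a valid inequality as written, since $|f_k|$ grows exponentially off $[-1,1]$; it becomes correct only in the microscopic Airy window, which you would need to delimit explicitly.
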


\begin{proof}
Let us begin by writing

\begin{equation}
f_k(x)=\frac{1}{k+1}T_{k+1}(x)=\frac{1}{k+1}\sum_{j=0}^{k+1}\frac{T_{k+1}^{(j)}(0)}{j!}x^{j}.
\end{equation}

We then have (see \cite[Section 2.1.1]{agz})

\begin{equation}
N\int_{-1}^{1}f(x)\sigma(x)dx=\frac{N}{k+1}\sum_{j=0}^{\lfloor\frac{k+1}{2}\rfloor}\frac{T_{k+1}^{(2j)}(0)}{(2j)!}4^{-j}\frac{1}{j+1}\binom {2j}{j}.
\end{equation}

On the other hand, we have (for more information, see e.g. \cite[Section 3.3.2]{agz})

\begin{equation}
\sum_{j=1}^{N}\E f(\lambda_j)=\frac{N}{k+1}\sum_{j=0}^{\lfloor\frac{k+1}{2}\rfloor}\frac{T_{k+1}^{(2j)}(0)}{(2j)!}b_j^{(N)}4^{-j}\frac{1}{j+1}\binom {2j}{j},
\end{equation}

\noindent where $(b_j^{(N)})_j$ satisfy the Harer-Zagier recursion:

\begin{equation}
b_{k+1}^{(N)}=b_k^{(N)}+\frac{k(k+1)}{N^{2}}b_{k-1}^{(N)}
\end{equation}

\noindent with initial data $b_{-1}^{(N)}=0$ and $b_0^{(N)}=1$.

\vspace{0.3cm}

In \cite{agz} it is proven (see \cite[equation (3.3.13)]{agz} and around it) that there exists a constant $c>0$ such that for $j\geq 1$

\begin{equation}
1\leq b_j^{(N)}\leq e^{c\frac{j^{3}}{N^{2}}}.
\end{equation}

Also we note that by Stirling's approximation, one can check that

\begin{equation}
\binom{2j}{j}\frac{1}{j+1}4^{-j}\leq C (j+1)^{-3/2}
\end{equation}

\noindent for some $C> 0$. Putting things together and using Proposition \ref{prop:markov} and Lemma \ref{le:tbound}

\begin{align}
\notag \Bigg|\E\sum_{j=1}^{N}&f(\lambda_j)-N\int_{-1}^{1}f(x)\sigma(x)dx\Bigg|\\
\notag  &\leq C\frac{N}{k+1}\sum_{j=0}^{\left\lfloor \frac{k+1}{2}\right\rfloor}\frac{T_{k+1}^{(2j)}(1)}{(2j)!}(j+1)^{-3/2}(b_j^{(N)}-1)\\
&\leq C\frac{N}{k+1}\sum_{j=0}^{\left\lfloor\frac{k+1}{2}\right\rfloor}\frac{T_{k+1}^{(2j)}(1)}{(2j)!}(j+1)^{-3/2}\frac{j^{3}}{N^{2}}\\
\notag &\leq C\frac{\sqrt{k+1}}{N}T_{k+1}(2)\\
&\notag \leq C\frac{\sqrt{k+1}}{N}e^{2 k}.
\end{align}

For $k\leq \sqrt{\log N}$, this tends to zero as $N\to \infty$.
\end{proof}

Our last step in proving a bound for $\E|s_k(X_N)|$ is estimating the centered linear statistic. Our argument is very similar to \cite[Section 5]{fks}.

\begin{lemma}\label{le:ls}
There exists a constant $C>0$ such that for $k\leq \sqrt{\log N}$ and all $N$, 

\begin{equation}
\E\left|\sum_{j=1}^{N}(f_k(\lambda_j)-\E f_k(\lambda_j))\right|\leq C.
\end{equation}
\end{lemma}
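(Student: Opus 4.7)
The plan is to reduce the statement to a uniform bound on the variance via the inequality $\E|Y-\E Y|\leq \sqrt{\mathrm{Var}(Y)}$ applied to $Y=\sum_j f_k(\lambda_j)$. Once this reduction is made, the task becomes controlling $\mathrm{Var}(\sum_j f_k(\lambda_j))$ by a constant independent of $N$ and of $k\leq \sqrt{\log N}$. Heuristically this should work because, in the limit $N\to\infty$, Theorem \ref{th:johansson} and Lemma \ref{le:var} give variance $\rho_{f_k}$, and since $f_k'(x)=U_k(x)$ on $(-1,1)$ we have (using the orthogonality in Remark \ref{rem:ortho}) $s_l(f_k'(x)\sqrt{1-x^2})=\delta_{k,l}$, so $\rho_{f_k}=\frac{1}{4(k+1)}\leq \frac14$ uniformly in $k$.

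For the finite-$N$ variance I would use the determinantal structure of the GUE. Writing $K_N$ for the correlation kernel of the unordered eigenvalues at the scaling of Definition \ref{def:gue}, the standard formula reads
\begin{equation*}
\mathrm{Var}\Bigl(\sum_j f_k(\lambda_j)\Bigr) = \frac{1}{2}\int_{\R}\int_{\R} (f_k(x)-f_k(y))^2 K_N(x,y)^2\, dx\, dy.
\end{equation*}
I would split this double integral according to whether $x,y$ lie in $I_N=[-(1+\delta_N),1+\delta_N]$ (with $\delta_N=N^{-1/2}$ as in the proof of Lemma \ref{le:enfk}) or not. On $I_N\times I_N$, $|f_k|$ is bounded by $e^{2k\sqrt{\delta_N}}/(k+1)$ thanks to Lemma \ref{le:tbound}, so for $k\leq \sqrt{\log N}$ the function $f_k$ restricted to $I_N$ is uniformly bounded. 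A Johansson-type computation (or direct use of the Hermite expansion of $K_N$, as in \cite{johansson}) shows that the contribution of $I_N\times I_N$ to the variance tends to $\rho_{f_k}=1/(4(k+1))$ and is, for $N$ large and all $k$ in our range, bounded by a fixed constant.

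For the remaining region (at least one of $x,y$ outside $I_N$), I would treat the cross integral by Cauchy--Schwarz in $K_N(x,y)^2$, reducing matters to controlling
\begin{equation*}
\int_{|x|>1+\delta_N} f_k(x)^2 K_N(x,x)\, dx \;=\; N\int_{|x|>1+\delta_N} f_k(x)^2 \rho_N(x)\, dx,
\end{equation*}
plus an analogous term with $f_k(y)^2$, plus the bulk contribution already handled. Using Lemma \ref{le:tbound} to estimate $f_k(x)^2\leq e^{4k\sqrt{|x|-1}}/(k+1)^2$ on the tail, and then inserting the three regimes for $\rho_N$ from Proposition \ref{prop:density} (the Airy regime on $[1+\delta_N,1+\delta]$, the exponential decay regime on $[1+\delta,\infty)$, and the boundary region via part~1), one obtains precisely the same estimates as in the proof of Lemma \ref{le:enfk}: each regime gives a contribution bounded by $Ce^{4k\sqrt{\delta_N}}/(k+1)^2$ or better, which is uniformly bounded for $k\leq \sqrt{\log N}$.

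The main obstacle I anticipate is extracting a \emph{uniform-in-$k$} constant from the bulk computation: a crude application of Johansson's CLT only gives variance $\to \rho_{f_k}$ pointwise in $k$, so one really needs either the explicit Hermite-kernel computation from \cite{johansson} or an a~priori variance bound with controlled dependence on the degree and $C^m$-norm of the test function. This is exactly the technical content one has to extract from the determinantal formula, and the restriction $k\leq \sqrt{\log N}$ is what allows the tail factor $e^{4k\sqrt{\delta_N}}$ to stay bounded.
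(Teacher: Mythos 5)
Your proposal follows essentially the same route as the paper: reduce to a variance bound via $\E|Y-\E Y|\leq\sqrt{\mathrm{Var}(Y)}$, invoke the determinantal formula $\mathrm{Var}(\sum_j f_k(\lambda_j))\propto\int\int(f_k(x)-f_k(y))^2K_N(x,y)^2$, split into a bulk square and a tail region, cite an external computation for the bulk, and control the tail via Lemma~\ref{le:tbound} together with the three regimes of Proposition~\ref{prop:density} exactly as in the proof of Lemma~\ref{le:enfk}. The gap you correctly flag in the bulk (a uniform-in-$k$ constant rather than a pointwise CLT) is what the paper fills by citing \cite[Section~5.2]{fks} for $\int_{[-1,1]^2}(f_k(x)-f_k(y))^2K_N(x,y)^2\,dxdy\leq C$.

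Two small technical deviations, neither fatal but worth noting. First, you split at $[-(1+\delta_N),1+\delta_N]$ rather than at $[-1,1]$; since the cited bulk bound from \cite{fks} is formulated over $[-1,1]^2$, the paper's split is the more natural one and saves you from having to re-derive the bulk estimate on a slightly enlarged square. Second, for the cross region your Cauchy--Schwarz step $(f_k(x)-f_k(y))^2\leq 2f_k(x)^2+2f_k(y)^2$ leaves a term of the form $\int_{y\,\mathrm{in}}f_k(y)^2\int_{x\,\mathrm{out}}K_N(x,y)^2\,dx\,dy$, which does not reduce to $\int_{|y|>1+\delta_N}f_k(y)^2K_N(y,y)\,dy$; you would need the separate (true, but not yet stated) fact that the expected number of eigenvalues outside $I_N$ is $o(1)$. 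The paper avoids this entirely by using Lemma~\ref{le:chebyder} and Proposition~\ref{prop:markov} to observe that $|f_k(x)|\geq|f_k(y)|$ whenever $|x|\geq 1$ and $|y|\leq|x|$, so $(f_k(x)-f_k(y))^2\leq 4f_k(x)^2$ and the entire tail contribution is dominated by the single integral $\int_{|x|\geq1}f_k(x)^2 N\rho_N(x)\,dx$ after applying the reproducing property once. That trick gives a noticeably tidier argument and is worth adopting.
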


\begin{proof}
By the determinantal structure of the eigenvalues of the GUE (for details, see \cite{ps})

\begin{align}\label{eq:lsint}
\notag \left(\E\left|\sum_{j=1}^{N}(f_k(\lambda_j)-\E f_k(\lambda_j))\right|\right)^{2}&\leq \E\left|\sum_{j=1}^{N}(f_k(\lambda_j)-\E f_k(\lambda_j))\right|^{2}\\
&=\frac{1}{8}\int_{\R^{2}}(f_k(x)-f_k(y))^{2}K_N(x,y)^{2}dxdy,
\end{align}

\noindent where $K_N$ is the correlation kernel of the GUE which can be written as 

\begin{equation}
K_N(x,y)=\frac{\psi_N^{(N)}(x)\psi_{N-1}^{(N)}(y)-\psi_{N-1}^{(N)}(x)\psi_N^{(N)}(y)}{x-y},
\end{equation}

\noindent where $\psi_l^{(N)}(x)=e^{-Nx^{2}}P_l^{(N)}(x)$ with $P_l^{(N)}$ being a Hermite polynomial of degree $l$ normalized such that $(\psi_l^{(N)})_l$ is orthonormal in $L^{2}(\R,dx)$.

\vspace{0.3cm}

In \cite[Section 5.2]{fks} it is argued that 

\begin{equation}
\int_{[-1,1]^{2}}(f_k(x)-f_k(y))^{2}K_N(x,y)^{2}dxdy\leq C
\end{equation}

\noindent for some constant $C$ independent of $k$ and $N$. Thus we need to only estimate the integral in \eqref{eq:lsint} outside of this square. Using symmetry under interchanging $x$ and $y$ we find 

\begin{align}
\notag \int_{\R^{2}\setminus [-1,1]^{2}}&(f_k(x)-f_k(y))^{2}K_N(x,y)^{2}dxdy\\
&\leq 2\int_{|x|\geq 1}\int_{|y|\leq |x|}(f_k(x)-f_k(y))^{2}K_N(x,y)^{2}dxdy.
\end{align}

Then by Lemma \ref{le:chebyder} and Proposition \ref{prop:markov}, (in the special case  where in the notation of the lemma and proposition $k=0$ - i.e. we don't differentiate at all) $|f_k(x)|\geq |f_k(y)|$ so we have 

\begin{align}
\notag \int_{\R^{2}\setminus [-1,1]^{2}}&(f_k(x)-f_k(y))^{2}K_N(x,y)^{2}dxdy\\
&\leq 8\int_{|x|\geq 1}\int_{|y|\leq |x|}f_k(x)^{2}K_N(x,y)^{2}dxdy\\
&\notag \leq 8\int_{|x|\geq 1}f_k(x)^{2}\int_\R K_N(x,y)^{2}dxdy\\
&\notag = 8\int_{|x|\geq 1}f_k(x)^{2}N\rho_N(x)dx,
\end{align}

\noindent where we used the reproducing property of the kernel. This integral can be bounded (Lemma \ref{le:tbound}) by the integral 

\begin{equation}
\frac{16}{k^{2}}\int_1^{\infty}e^{4k\sqrt{x-1}}N\rho_N(x)dx
\end{equation}

\noindent which we've estimated already and found that the integral is bounded for $k\leq \sqrt{\log N}$ so we conclude that there exists a constant $C>0$ such that for all $N$,

\begin{equation}
\sup_{k\leq \sqrt{\log N}}\E\left|\sum_{j=1}^{N}(f(\lambda_j)-\E f(\lambda_j))\right|\leq C.
\end{equation}

\end{proof}

Combining Lemma \ref{le:delta}, Lemma \ref{le:enfk}, Lemma \ref{le:els}, and Lemma \ref{le:ls}, we find

\begin{lemma}\label{le:smallk}
There exists a constant $C>0$ such that for $k\leq \sqrt{\log N}$, 

\begin{equation}
\E|s_k(X_N)|\leq C
\end{equation}

\noindent for all $N$.

\end{lemma}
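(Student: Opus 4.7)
The plan is straightforward bookkeeping: recall the identity derived at the start of the tightness subsection, namely
\begin{equation}
\frac{\pi}{2} s_k(X_N) = \sum_{j=1}^{N} f_k(\lambda_j) - N\int_{-1}^{1} f_k(x)\sigma(x)\,dx - \Delta_N(f_k) - \mathcal{E}_N(f_k),
\end{equation}
where $f_k = (k+1)^{-1} T_{k+1}$. I would add and subtract $\sum_{j=1}^N \E f_k(\lambda_j)$ to rewrite the first two terms as the sum of a centered linear statistic plus a deterministic discrepancy:
\begin{equation}
\sum_{j=1}^N f_k(\lambda_j) - N\int_{-1}^1 f_k(x)\sigma(x)\,dx = \sum_{j=1}^N \bigl(f_k(\lambda_j) - \E f_k(\lambda_j)\bigr) + \Bigl(\sum_{j=1}^N \E f_k(\lambda_j) - N\int_{-1}^1 f_k(x)\sigma(x)\,dx\Bigr).
\end{equation}

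Then I would apply the triangle inequality to $\E|s_k(X_N)|$ and estimate each of the four resulting pieces separately using the lemmas already established in this subsection. Explicitly, the centered linear statistic is controlled by Lemma \ref{le:ls}, which gives a uniform $O(1)$ bound on its $L^1$ norm for $k \leq \sqrt{\log N}$; the deterministic discrepancy between $\sum_j \E f_k(\lambda_j)$ and $N\int f_k \sigma$ tends to zero (in fact at a polynomial rate in $1/N$) by Lemma \ref{le:els}; and $\E|\Delta_N(f_k)|$ and $\E|\mathcal{E}_N(f_k)|$ are each bounded by $C N^{-\epsilon}$ by Lemmas \ref{le:delta} and \ref{le:enfk} respectively.

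Combining these four estimates gives $\E|\tfrac{\pi}{2} s_k(X_N)| \leq C$ uniformly in $k \leq \sqrt{\log N}$ and $N$, which yields the claim after multiplying by $2/\pi$ and absorbing this factor into the constant. There is no real obstacle at this stage, since all of the technical work (the bounds on linear statistics of $f_k$, on the Riemann-sum-type error $\Delta_N(f_k)$, on the Taylor-expansion error $\mathcal{E}_N(f_k)$, and on the expectation-versus-integral discrepancy) has already been carried out in the preceding four lemmas; the only thing to be careful about is keeping track of the fact that each bound is independent of $k$ in the stated range so that the constant $C$ in the conclusion does not depend on $k$.
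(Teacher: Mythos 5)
Your proof is exactly what the paper does: the paper states Lemma \ref{le:smallk} as an immediate consequence of combining Lemmas \ref{le:delta}, \ref{le:enfk}, \ref{le:els}, and \ref{le:ls} via the same decomposition of $\frac{\pi}{2}s_k(X_N)$ and the triangle inequality. Correct, and essentially identical in approach.
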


Now putting together Lemma \ref{le:bigk} and Lemma \ref{le:smallk}, we see that for some constant $C>0$, $\E|s_k(X_N)|\leq C(k+1)^{2}$. As in \cite{fks,hko}, this implies tightness in $\mathcal{S}_{-\alpha}$ for $\alpha>3$. For completeness, we'll give a brief sketch of a proof of this fact.

\begin{proposition}
For $\alpha>3$, $(X_N)_N$ is tight in $\mathcal{S}_{-\alpha}$.
\end{proposition}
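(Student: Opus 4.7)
The plan is to deduce tightness from the moment bounds via a Rellich-type compact embedding, exactly along the lines indicated in the paragraphs preceding the proposition. The starting point is to convert the two estimates on $\E|s_k(X_N)|$ into a single uniform bound. Combining Lemma \ref{le:smallk} (for $k\leq\sqrt{\log N}$, where $\E|s_k(X_N)|\leq C$) with Cauchy--Schwarz applied to Lemma \ref{le:bigk} (for $k>\sqrt{\log N}$, where $\E|s_k(X_N)|\leq C(k+1)\sqrt{\log N}\leq C(k+1)^2$ since in that regime $\sqrt{\log N}\leq k+1$), one obtains
\begin{equation*}
\sup_N \E|s_k(X_N)| \leq C(k+1)^{2}
\end{equation*}
for all $k\geq 0$.

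The next step is to fix an auxiliary exponent $\alpha'$ with $3<\alpha'<\alpha$ and use the inequality \eqref{eq:tight} to get
\begin{equation*}
\sup_N \E\|X_N\|_{-\alpha'} \leq \sum_{k=0}^{\infty}\E|s_k(X_N)|(1+k^{2})^{-\alpha'/2} \leq C\sum_{k=0}^{\infty}(k+1)^{2-\alpha'}=:M<\infty,
\end{equation*}
which is finite precisely because $\alpha'>3$. By Markov's inequality, for each $\epsilon>0$ the closed ball $B_{R}=\{s\in\mathcal{S}_{-\alpha'}:\|s\|_{-\alpha'}\leq R\}$ with $R=M/\epsilon$ satisfies $\Prob(X_N\notin B_R)\leq \epsilon$ uniformly in $N$.

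It remains to verify that $B_R$, viewed as a subset of $\mathcal{S}_{-\alpha}$, is compact. This is a standard diagonal argument: given a sequence $(s^{(n)})\subset B_R$, the coordinate bound $|s_k^{(n)}|\leq R(1+k^{2})^{\alpha'/2}$ lets one extract by Cantor diagonalization a subsequence converging coordinate-wise to some $s^{*}$, with $\|s^{*}\|_{-\alpha'}\leq R$ by Fatou. To upgrade coordinate-wise convergence to convergence in $\mathcal{S}_{-\alpha}$, split
\begin{equation*}
\|s^{(n)}-s^{*}\|_{-\alpha}^{2}=\sum_{k\leq J}|s_k^{(n)}-s_k^{*}|^{2}(1+k^{2})^{-\alpha}+\sum_{k>J}|s_k^{(n)}-s_k^{*}|^{2}(1+k^{2})^{-\alpha}.
\end{equation*}
The tail is at most $(1+J^{2})^{-(\alpha-\alpha')}\|s^{(n)}-s^{*}\|_{-\alpha'}^{2}\leq 4R^{2}(1+J^{2})^{-(\alpha-\alpha')}$, which goes to zero as $J\to\infty$ since $\alpha>\alpha'$; the head goes to zero by coordinate-wise convergence for each fixed $J$. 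Hence $B_R$ is a compact subset of $\mathcal{S}_{-\alpha}$, and tightness follows.

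I expect no serious obstacle: the moment estimate has already been obtained in Lemma \ref{le:bigk} and Lemma \ref{le:smallk}, and the compact embedding is elementary. The only subtlety worth emphasizing in the write-up is the asymmetry between using the $L^{1}$ norm bound $\E\|X_N\|_{-\alpha'}\leq M$ (rather than an $L^{2}$ bound, which would require covariance estimates not available here) and the choice of $\alpha'>3$, which is precisely what forces the threshold $\alpha>3$ appearing in the statement.
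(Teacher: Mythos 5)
Your proposal is correct and follows the same overall strategy as the paper: combine Lemma \ref{le:smallk} and Lemma \ref{le:bigk} to obtain $\E|s_k(X_N)|\leq C(k+1)^2$, pick an intermediate exponent $3<\alpha'<\alpha$, bound $\E\|X_N\|_{-\alpha'}$ via \eqref{eq:tight}, and conclude by Markov's inequality together with compactness of balls of $\mathcal{S}_{-\alpha'}$ inside $\mathcal{S}_{-\alpha}$. The only difference is one of presentation: the paper invokes a reference for the compact embedding $\mathcal{S}_{-\alpha'}\hookrightarrow\mathcal{S}_{-\alpha}$, whereas you supply a self-contained diagonalization argument and verify directly (via Fatou) that the limit stays in the ball, which neatly sidesteps the usual subtlety that a compact operator only gives precompactness of the image of a ball.
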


\begin{proof}
From the definition of $\mathcal{S}_{\alpha}$, $\mathcal{S}_{-\alpha'}\subset \mathcal{S}_{-\alpha}$ for $\alpha'<\alpha$ and one can check (see e.g. \cite[Theorem 8.3]{kress}) that the embedding is a compact operator. Thus if we consider any ball in $\mathcal{S}_{-\alpha'}$, say $B_{R,\alpha'}=\lbrace ||\psi||_{-\alpha'}\leq R\rbrace$, this is a compact subset of $\mathcal{S}_{-\alpha}$ for $\alpha'<\alpha$. Now when we consider $(X_N)$ as elements in $\mathcal{S}_{-\alpha}$ and $B_{R,\alpha'}$ as a compact subset of $\mathcal{S}_{-\alpha}$, we have by Markov's inequality

\begin{equation}
\mathbb{P}(X_N\notin B_{R,\alpha'})=\mathbb{P}(||X_N||_{-\alpha'}\geq R)\leq \frac{\E||X_N||_{-\alpha'}}{R}.
\end{equation}

Now let $3<\alpha'<\alpha$. We have (recall \eqref{eq:tight})

\begin{align}
\notag\E||X_N||_{-\alpha'}&\leq \sum_{k=0}^{\infty}\E|s_k(X_N)|(1+k^{2})^{-\alpha'/2}\\
&\leq C\sum_{k=1}^{\infty} k^{2} k^{-\alpha'}\\
\notag &< \infty
\end{align}

\noindent so we see that $(X_N)$ is tight in $\mathcal{S}_{-\alpha}$.

\end{proof}

\subsection{Proof of Theorem \ref{th:main}}

While we've already mentioned that tightness and convergence on cylinder sets implies weak convergence, let us still sketch some of the details here. As mentioned, Prohorov's theorem is of key importance here. By it, all subsequences of $(X_N)_N$ have a weakly converging subsequence in $\mathcal{S}_{-\alpha}$ for $\alpha>3$. One then needs to check that all of these limits have the same law, or equivalently, that their finite dimensional distributions agree. Due to the linear structure of the Sobolev space, this means that we need to check that if $X^{(1)}$ and $X^{(2)}$ are two limit points of $(X_N)_N$, then for each $f\in \mathcal{S}_\alpha$

\begin{equation}
X^{(1)}(f)\stackrel{d}{=}X^{(2)}(f).
\end{equation}

For $i=1,2$, Let $N_k^{(i)}$ be sequences such that $X_{N_k^{(i)}}\to X^{(i)}$ weakly as $k\to \infty$. We then fix some $M\in \Z_+$ and write 

\begin{align}
\notag X_{N_k^{(i)}}(f)&=\sum_{j=0}^{\infty}s_j\left(X_{N_k^{(i)}}\right)s_j(f)\\
&=\sum_{j=0}^{M}s_j\left(X_{N_k^{(i)}}\right)s_j(f)+ \sum_{j=M+1}^{\infty}s_j\left(X_{N_k^{(i)}}\right)s_j(f).
\end{align}

Now if we first let $k\to\infty$ and then $M\to\infty$, the first term converges weakly to $X(f)$ (Lemma \ref{le:fdd}). For the second term we have 

\begin{align}
\notag \E\left|\sum_{j=M+1}^{\infty}s_j\left(X_{N_k^{(i)}}\right)s_j(f)\right|&\leq ||f||_{\alpha}\E\sqrt{\sum_{j=M+1}^{\infty}\left|s_j\left(X_{N_k^{(i)}}\right)\right|^{2}(1+j^{2})^{-\alpha}}\\
&\leq C||f||_{\alpha}\sum_{j=M+1}^{\infty}\E\left|s_j\left(X_{N_k^{(i)}}\right)\right| j^{-\alpha}
\end{align}

By the bound $\E|s_j(X_N)|\leq C j^{2}$, we see that as $k\to\infty$ and then $M\to\infty$

\begin{equation}
\sum_{j=M+1}^{\infty}s_j\left(X_{N_k^{(i)}}\right)s_j(f)\stackrel{d}{\to} 0.
\end{equation}

By Slutsky's theorem we conclude that

\begin{equation}
X_{N_k^{(i)}}(f)\stackrel{d}{\to} X(f)
\end{equation}

\noindent so that $X^{(1)}\stackrel{d}{=}X^{(2)}$.

\end{document}